\setlist[enumerate]{label=\rm(\arabic*)}
\newtheorem{lemma}[subsection]{Lemma}
\newtheorem{proposition}[subsection]{Proposition}
\newtheorem{theorem}[subsection]{Theorem}
\newtheorem{remark}[subsection]{Remark}
\newtheorem{definition}[subsection]{Definition}
\newtheorem{corollary}[subsection]{Corollary}
\newtheorem{example}[subsection]{Example}
\newcommand{\bbR}{\mathbb{R}}
\newcommand{\bbC}{\mathbb{C}}
\newcommand{\bbN}{\mathbb{N}}
\newcommand{\bfF}{\mathbf{F}}
\newcommand{\F}{\mathcal{F}(M,P)}
\newcommand{\FF}{\mathcal{F}^{\circ}(M,P)}
\newcommand{\id}{\mathrm{id}}
\newcommand{\Crit}{\Sigma_f}
\newcommand{\CritP}{\Crit^p}
\newcommand{\CritC}{\Crit^C}
\newcommand{\fz}{f_0}
\newcommand{\per}{\mathsf{per}}
\newcommand{\sst}{\sigma^0}
\newcommand{\CYL}{S^1\times [0,1]}
\newcommand{\DIS}{D^2}
\newcommand{\SPH}{S^2}
\newcommand{\TOR}{T^2}
\newcommand{\xsurj}{\ar@{->>}[r]}
\newcommand{\xinj}{\ar@{^{(}->}[r]}
\title[Normal forms for functions on surfaces ]{Normal forms of functions with degenerate singularities on surfaces equipped with semi-free circle actions}
\author{Bohdan Feshchenko}
\address{Department of algebra and topology, Institute of Mathematics of National Academy of Science of Ukraine,
	Tereshchenkivska, 3, Kyiv, 01601, Ukraine}
\email{fb@imath.kiev.ua}
\keywords{Circle-valued functions, stabilizers, homotopy type}
\date{\today}
\begin{document}
	\begin{abstract}
		This article is devoted to the study of a certain class of smooth circle-valued functions on a cylinder $S^1\times [0,1]$, a torus $T^2$, a disk $D^2$ and a sphere $S^2$ which is a generalization of  Morse-Bott functions without saddles. We established a "normal form" for functions from this class, namely, we proved that any such function represents in the form $f = \varkappa\circ f_0\circ h^{-1}$, where $f_0$ is the simplest Morse function on the given surface for some diffeomorphism $h$ of $M$ and a smooth function $\varkappa$ satisfying some natural conditions.
	\end{abstract}	
	\maketitle

\section{Introduction}
The study of smooth functions on manifolds and their properties lies at the heart  of a modern differential topology and a theory of dynamical systems. It connects local geometric properties of functions to the global topology of the underlying surfaces. This connection is well understood for generic classes of smooth functions like Morse functions, functions with isolated singularities, Morse-Bott functions. 

Let $M$ be a smooth manifold, and $P$ be either a real line $\bbR$ or a circle $S^1$.
Recall that two smooth $P$-valued function $f,g:M\to P$ are called topologically (smoothly) equivalent, if there exist homeomorphisms (diffeomorphisms) $h:M\to M$ and $\ell:P\to P$ such that 
\begin{equation}\label{eq:f-g-def}
f = \ell\circ g\circ h^{-1}.
\end{equation}

The question of when two functions are smoothly or topologically equivalent is challenging and remains open in general. Topological equivalence of functions with isolated singularities on smooth compact surfaces were studied by V.~Sharko \cite{Sharko:UMZ:2003}, E.~V.~Kulinich \cite{Kulinich:1998:MFAT}, A.~Prishlyak \cite{Prishlyak:TA:2002},
 A.~Kadubovskiy  \cite{Kadubovskiy:UMZH:2006}, for simple Morse functions by J.~Mart\'{\i}nez-Alfaro,  I.~S.~Meza-Sarmiento,  and R.~Oliveira
 \cite{MartinezMezaOliveria:2016}, for circle-valued simple Morse-Bott functions were studied by E.~B.~Batista, J.~C.~F.~Costa and I.~S.~Meza-Sarmiento \cite{BatistaCostaMeza:2022}.
 V.~Sharko obtained the description of connected components of the space of Morse functions.

The functions (Definition \ref{def:class-FF}) we will study in this paper are closely related to questions of the study of homotopy properties stabilizers of smooth functions. 
We will recall the necessary definitions.
 The product $\mathcal{D}(P)\times \mathcal{D}(M)$ of the groups of diffeomorphisms of $P$ and diffeomorphisms of $M$ naturally acts from the left on the space of smooth $P$-valued functions $C^{\infty}(M,P)$ by the rule:
$$
\gamma_{LR}: \mathcal{D}(P)\times \mathcal{D}(M)\times  C^{\infty}(M,P)\to C^{\infty}(M,P),\qquad \gamma_{LR}(\ell,h, f) = \ell\circ f\circ h^{-1}.
$$
The group $\mathcal{D}(M) = \id_M \times \mathcal{D}(M)$ is a subgroup of 
$\mathcal{D}(P)\times \mathcal{D}(M)$ and $\gamma_{LR}$ induces a left action
$$
\gamma_R: \mathcal{D}(M)\times  C^{\infty}(M,P)\to C^{\infty}(M,P),\qquad \gamma_{R}(h, f) = f\circ h^{-1}.
$$
Actions $\gamma_{LR}$ and $\gamma_R$ are left actions, but
it will be convenient to
use the terms “left-right” and “right” to refer the sides at which we apply the corresponding diffeomorphisms.
Note that two smooth functions $f,g:M\to P$ are smooth equivalent, if they belongs to the same orbit of the action $\gamma_{LR}.$

\subsection{Stabilizers of functions from $C^{\infty}(M,P)$}
Denote by $\mathcal{D}^+(P)$ the group of orientation preserving diffeomorphisms of $P.$
Let $f$ be a smooth function in $C^{\infty}(M,P)$. The image $\mathrm{Im}(f) = f(M)$ is a smooth submanifold of $P$.
Denote by $\mathcal{D}^+(\mathrm{Im}(f))$ a subgroup of  $\mathcal{D}^+(P)$.
One can define stabilizers of $f$ with respect to the restriction of the action $\gamma_{LR}$ on $\mathcal{D}^+(\mathrm{Im}(f))\times \mathcal{D}(M)$ and the action $\gamma_R$ by the formulas:
\begin{align*}
\mathcal{S}_{LR}(f) &= \{ (\ell, h)\in \mathcal{D}^+(\mathrm{Im}(f))\times \mathcal{D}(M)\,|\, \ell\circ f\circ h^{-1} = f  \},\\
\mathcal{S}_R(f) &= \{ h\in \mathcal{D}(M)\,|\, f\circ h^{-1} = f \}.
\end{align*}
There is a canonical inclusion $j:\mathcal{S}_{R}(f)\hookrightarrow \mathcal{S}_{LR}(f)$ given by $j(h) = (\id_{\mathrm{Im}(f)}, h)$.
Denote by $\mathcal{S}_{R}^{\id}(f)$ a connected component of $\mathcal{S}_R(f)$ containing $\id_M.$

Homotopy properties of these spaces and their connected components are well studied for some ``wide'' class smooth functions with isolated singularities on surfaces by S.~Maksymenko, see \cite{Maksymenko:AGAG:2006, Maksymenko:UMZ:ENG:2012} and also a review paper \cite{Maksymenko:2021:review}.
In \cite{Feshchenko:Arxiv2023}  we studied such questions on  stabilizers (see Theorem \ref{thm:stab-homo-prop} below)  for a broader class of smooth functions on surfaces with non-isolated singularities $\F$, whose connected components of the set of critical points of $f$ are either  isolated critical points, or critical circles:

\begin{definition}[cf. Definition 1.1  \cite{Feshchenko:Arxiv2023}]\label{def:class-F}
{\rm
Let $M$ be compact surface possible with boundary $\partial M$, and
 $\F$ be a class of smooth functions on $M$ which satisfy:
\begin{enumerate}
	\item  a function $f|_{\partial M}$ is locally constant,
	\item a set of critical points $\Sigma_f$ of $f$ is a disjoint union of smooth submanifolds of $M$ and $\Sigma_f\subset \mathrm{Int}(M),$
	\item  for each connected component $C$ of $\Sigma_f$ and each critical point $p\in C$
	there exist a local chart $(U,\phi:U\to \bbR^2)$  near $p$  with $\phi(p) = 0$ and a chart $(V, \psi:V\to \bbR)$ near $f(p)\in P$ such that $f(U)\subset V$ and a local representation $f_p=\psi\circ f\circ \phi^{-1}:\phi(U)\to \psi(V)$ of $f$ is
	\begin{enumerate}
		\item[(3.a)] either a polynomial homogeneous polynomial $f_p$ without multiple factors,
		\item[(3.b)] or  is given by $f_p(x,y) = \pm y^{n_C}$ for some $n_C\in \bbN_{\geq 2}$ depending of $C.$  
	\end{enumerate}
\end{enumerate}
}
\end{definition}

It is known that for $f\in \F$ the stabilizer $\mathcal{S}_{R}(f)$ is strong deformation retract of $\mathcal{S}_{LR}(f)$, so   the inclusion $j:\mathcal{S}_{R}(f)\hookrightarrow \mathcal{S}_{LR}(f)$ as above is a homotopy equivalence, see \cite[Theorem 1.3]{Maksymenko:BulletinSciMath:2006}.
Note that recently we established the homotopy type of the connected component $\mathcal{S}^{\id}_{R}(f)$ of the identity map in $\mathcal{S}_{R}(f)$ for functions from $\F$, see Theorem \ref{thm:stab-homo-prop} below.

  \subsection{Functions from $\FF$ and their normal forms} 
Consider the following class subclass $\FF$ of $\F$:
\begin{definition}\label{def:class-FF}
	{\rm 
		Assume that $M$ is an {\it oriented} surface. Denote by $\FF$ a class of functions
		which satisfy (1), (2), (3.b) of Definition \ref{def:class-F}, and $(3.a')$ instead of (3.a):
		\begin{enumerate}[leftmargin=1in]
			\item[($3.a'$)] either a polynomial $f_p:\bbR^2\to \bbR$, given by $f_p(x,y) = \pm ( x^2+ y^2)$.
		\end{enumerate}
	}
\end{definition}
 Generalities of function from $\FF$ will be discussed in Section \ref{sec:hom-prop}, but here we only mention that the class $\FF$ is ``small'' in the sense that functions from $\FF$ appear on several ``simple'' surfaces:
\begin{proposition}\label{prop:properties-f}
	Let $f$ be a function from $\FF$. 
	
	{\rm (1)} Then  $M$ is diffeomorphic to  one of the following four surfaces: a cylinder $S^1\times [0,1]$, a disk $D^2,$ a sphere $S^2$ or a torus $T^2$.
	
	{\rm (2)} If $M$ is diffeomorphic to 
	\begin{itemize}
		\item  a cylinder or a torus, then $f$ has no isolated critical points,
		\item a disk, then $f$ has one Morse extremum (minimum or maximum),
		\item a sphere  then $f$ has two Morse extrema (minimum and/or maximum). Moreover, all four cases are possible (depends on the number of critical circles).
	\end{itemize}
\end{proposition}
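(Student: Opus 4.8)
The plan is to read off the global topology of $M$ from the absence of saddle-type singularities, which is the defining feature of $\FF$ (in contrast to $\F$, whose condition (3.a) admits homogeneous factors such as $x^2-y^2$). First I would describe the connected components of $\Crit$ and the induced local structure of the level sets. By (3.a$'$) an isolated critical point is a nondegenerate local extremum, near which the regular level sets are concentric circles; such a point behaves as a Morse extremum of index $0$ or $2$. By (3.b) a critical circle $C$ carries the transverse model $\pm y^{n_C}$: when $n_C$ is even, $C$ is an extremal (Morse--Bott-type) circle, with nearby level sets consisting of two parallel circles on the side where $f$ is larger (resp.\ smaller) and none on the other; when $n_C$ is odd, the model $\pm y^{n_C}$ is topologically that of a regular value, so $C$ sits inside an otherwise regular family of parallel circles. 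In every case the regular level sets of $f$ are disjoint unions of circles and $f$ has no saddle points. Because $f|_{\partial M}$ is locally constant and $\Crit\subset\Int(M)$, each boundary component of $M$ is itself a regular level circle.

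Next I would compute $\chi(M)$ by Poincar\'e--Hopf applied to a gradient-like vector field $X$ for $f$. At an isolated extremum $X$ is a source or a sink, contributing index $+1$. Along a critical circle $X$ vanishes on $C$ (in the transverse coordinate $X$ is proportional to $\pm y^{n_C-1}\partial_y$); a small perturbation supported near $C$ makes the zeros isolated with total index $\chi(S^1)=0$, and for odd $n_C$ one can remove the zeros entirely. Since every boundary component is a circle, $\chi(\partial M)=0$, so the Poincar\'e--Hopf index equals $\chi(M)$ irrespective of whether $X$ points inward or outward along $\partial M$. As the only nonzero contributions come from the isolated extrema, each equal to $+1$, I obtain $\chi(M)=\#\{\text{isolated extrema of }f\}\ge 0$.

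With $\chi(M)\ge 0$ part (1) is immediate: a connected compact oriented surface of genus $g$ with $b$ boundary circles has $\chi=2-2g-b$, and $\chi\ge 0$ forces $2g+b\le 2$, i.e.\ $(g,b)\in\{(0,0),(0,1),(0,2),(1,0)\}$, which are exactly $\SPH,\DIS,\CYL,\TOR$. For part (2) I read off the exact count of isolated extrema, which by the identity above equals $\chi(M)$ (there is no cancellation, all indices being $+1$): this is $0$ for the cylinder and the torus, $1$ for the disk, and $2$ for the sphere. To see that the stated combinations on $\SPH$ all occur, I would exhibit the height-type models $f=z$, $f=z^{3}$, $f=z^{2}$ and $f=1-z^{2}$ on $\SPH\subset\bbR^{3}$: these realize, respectively, a minimum and a maximum with no critical circle, a minimum and a maximum with one (odd) critical circle, two maxima with one (even) minimal circle, and two minima with one (even) maximal circle, giving all four cases and exhibiting the dependence on the number of critical circles.

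The main obstacle is the rigorous bookkeeping in the degenerate and bounded setting: justifying that the degenerate circles $\pm y^{n_C}$ contribute index $0$ (the perturbation of $X$ near $C$ and the claim that it does not affect the count) and that the boundary, being a union of regular level circles, produces no correction in Poincar\'e--Hopf. A secondary technical point is to verify that the local level-set pictures assemble into the global product (Reeb) structure needed to be sure the level sets are genuinely circles and that no further surfaces sneak in; this is precisely where the saddle-freeness of $\FF$ must be used in full.
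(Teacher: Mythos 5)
Your argument is correct and reaches the same conclusion by a genuinely different route. The paper never runs Poincar\'e--Hopf globally on $M$; instead it cuts $M$ along the critical circles (Lemma \ref{lm:decomp-cyl}), deletes small $f$-foliated annular neighborhoods of the circles from each piece $N$, observes that the remainder $Q$ carries a saddle-free Morse function so that $\chi(Q)=|\Sigma_{f|_Q}|\ge 0$ and $Q$ is a disk or a cylinder, and then sums $\chi(M)=\sum_j\chi(N_j)\ge 0$; part (2) is read off from the same Morse count on the pieces. You instead keep $M$ whole and argue that a gradient-like field has total index equal to the number of isolated extrema, the critical circles contributing $0$. The two ingredients you flag as needing care are exactly where the work sits, and both are sound: on an annular neighborhood $A$ of a circle $C$ the field is transverse to $\partial A$ (the boundary circles are level sets), so Morse's boundary version of Poincar\'e--Hopf gives total interior index $\chi(A)-\chi(\partial_-A)=0$ in every sign configuration, covering the degenerate models $\pm y^{n_C}$ uniformly; and each boundary component of $M$ is a regular level circle with $\chi=0$, so the inward/outward dichotomy costs nothing. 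What your approach buys is economy: one global index identity $\chi(M)=|\Sigma_f^{p}|$ yields (1) and (2) simultaneously, with no case analysis. What the paper's approach buys is the explicit decomposition of $M$ into cylinders and disks glued along critical circles, which is not a by-product of your computation but is reused heavily afterwards: the ordering of $\CritC$ in \textsection\ref{subsec:orering}, Corollary \ref{cor:cor3}, Lemma \ref{lm:torus-crit-circles}, and the inductive construction of the $H$-field in the proof of Proposition \ref{prop-main} all run over the pieces $N_j$. Your four explicit models $z$, $z^3$, $z^2$, $1-z^2$ on $\SPH$ for the realizability claim are a welcome addition, as the paper asserts this without examples.
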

\noindent Notice that surfaces from Proposition \ref{prop:properties-f} admit smooth semi-free actions of a circle $S^1$.

Our motivation to study functions from $\FF$ is the following result on 
 the homotopy type of $\mathcal{S}_R^{\id}(f)$ for functions from $\F$ mentioned above.
\begin{theorem}[Theorem 1.2 \cite{Feshchenko:Arxiv2023}, cf. Theorem 1.3 \cite{Maksymenko:AGAG:2006}]\label{thm:stab-homo-prop} Let $f$ be a function from $\F$. Then $\mathcal{S}_R^{\id}(f)$ is either contractible or homotopy equivalent to $S^1$. In particular
	$$
	\mathcal{S}_R^{\id}(f) \simeq \begin{cases}
		S^1, &\text{if $M$ is oriented and $f\in \FF$},\\
		\mathrm{pt}, & \text{otherwise}.
	\end{cases}
	$$	
\end{theorem}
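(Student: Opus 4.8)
The plan is to realize the identity component $\mathcal{S}_R^{\id}(f)$ as a space of time-shift diffeomorphisms of a suitable $f$-preserving flow, and then to read its homotopy type off the global periodicity of that flow. First I would construct a smooth vector field $F$ on $M$ that is tangent to the level sets of $f$ and nonzero away from $\Sigma_f$, obtained by gluing the local models of Definition \ref{def:class-F} with a partition of unity; when $M$ is oriented one may simply take $F$ to be the Hamiltonian field determined by an area form $\omega$ via $i_F\omega = df$ (with $df$ the well-defined closed $1$-form of $f$). In either case $df(F)=0$, so the flow $\Phi\colon M\times\mathbb{R}\to M$ of $F$ satisfies $f\circ\Phi_t = f$; hence each shift $\mathbf{X}_\alpha(x)=\Phi(x,\alpha(x))$, for $\alpha\in C^\infty(M)$ making $\mathbf{X}_\alpha$ a diffeomorphism, lies in $\mathcal{S}_R(f)$. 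I would then invoke the shift-map theory of Maksymenko \cite{Maksymenko:AGAG:2006}, which identifies the homotopy type of $\mathcal{S}_R^{\id}(f)$ with that of the admissible space of such $\alpha$, controlled by the periodic orbits of $F$ and by the local data of $f$ along $\Sigma_f$.

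Second, I would examine the local models. Along a critical circle of type (3.b) the function reads $\pm y^{n_C}$ and $F$ is nonvanishing there, its flow being a rotation of a tubular neighbourhood up to reparametrisation; at a Morse extremum of type $(3.a')$ the field $F$ generates an honest rotation of $\pm(x^2+y^2)$ whose orbits are circles shrinking onto the fixed point. By contrast, at a genuine saddle (a homogeneous polynomial of type (3.a) with at least two distinct linear factors, which occurs only when $f\notin\FF$) the level set through the critical point is singular, the flow has a rest point on a non-smooth orbit, and any $f$-preserving isotopy is pinned near the saddle; the identity component is then forced to collapse, contributing a contractible factor rather than a circle.

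Third, I would assemble the global picture over the Kronrod--Reeb graph $\Gamma_f$. If $f\in\FF$ and $M$ is oriented, then by Proposition \ref{prop:properties-f} the surface $M$ is a cylinder, disk, sphere, or torus and the only isolated critical points are the (at most two) extrema, so every regular level is a single circle and all orbits of $F$ are periodic. Normalising the shift function $\alpha$ against the period $\theta\colon M\to(0,\infty)$ integrates $F$ to a semi-free circle action $\rho\colon S^1\times M\to M$ preserving $f$, giving an inclusion $S^1\hookrightarrow\mathcal{S}_R^{\id}(f)$; the shift-map description then exhibits this inclusion as a homotopy equivalence, since the admissible $\alpha$ deformation retract (by linear scaling toward constants, respecting the boundary and the extrema) onto the constants, i.e.\ onto the orbit $\rho(S^1)$. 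In every remaining case --- $M$ non-orientable, or $f\notin\FF$ so that a saddle is present --- the graph $\Gamma_f$ carries an essential branching or the regular fibre ceases to be a single circle, the same normalisation deforms $\alpha$ to a fixed constant, and $\mathcal{S}_R^{\id}(f)$ is contractible.

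The main obstacle is the last, global step: showing that the inclusion of the circle action is a genuine homotopy equivalence, not merely a $\pi_0$- or $\pi_1$-injection. This requires carrying out the deformation retraction of the admissible shift functions $\alpha$ onto the constants compatibly across the edges and vertices of $\Gamma_f$ and across the local normal forms at the critical circles and extrema, together with verifying that the period $\theta$ is smooth and that the scaling $\alpha\mapsto s\alpha$ remains within $\mathcal{S}_R(f)$ for all $s\in[0,1]$. Equally delicate is excluding the circle in the remaining cases: one must confirm that a single saddle, or the absence of a global orientation, truly obstructs any nontrivial $f$-preserving circle subgroup, and this is precisely where condition $(3.a')$ and the orientability hypothesis of Definition \ref{def:class-FF} become decisive.
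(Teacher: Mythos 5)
First, a point of order: this paper does not prove Theorem \ref{thm:stab-homo-prop}. The statement is imported from \cite{Feshchenko:Arxiv2023} (cf.\ \cite{Maksymenko:AGAG:2006}) and appears here only as motivation for isolating the class $\FF$, so there is no in-paper proof to compare your argument against. On its own terms, your outline follows the expected shift-map strategy, but as written it leans on citing Maksymenko's machinery for exactly the steps that carry the content, and it contains one concrete inconsistency.

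The inconsistency is at the critical circles, and it is precisely where orientability and condition $(3.a')$ do their work. You propose to take $F$ to be the Hamiltonian field $i_F\omega = df$ and, two sentences later, assert that along a critical circle of type (3.b) ``$F$ is nonvanishing there.'' Both cannot hold: every point of a critical circle lies in $\Sigma_f$, so $X_f$ vanishes identically on each $C\in\CritC$. Worse, replacing $X_f$ by a nonvanishing field tangent to $\Delta_f$ is genuinely obstructed at an \emph{extremal} circle ($n_C$ even): in the model $f_p=\pm y^{n_C}$ one gets $X_{f_p}=\pm n_C y^{n_C-1}\,\partial/\partial x$, whose direction along nearby level circles flips as one crosses $C$, so the leaves cannot be coherently oriented across $C$ and no naive partition-of-unity gluing of $X_f$ with a local $\partial/\partial x$ yields a global nonvanishing tangent field. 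Fixing this requires the inductive sign-correction of Proposition \ref{prop-main} (reversing $X_f$ on alternate pieces of the decomposition of \textsection\ref{subsec:orering}), which is where orientability of $M$ and, on the torus, the parity constraint of Lemma \ref{lm:torus-crit-circles} enter; without it you have no periodic flow to normalize and hence no circle action to include into $\mathcal{S}_R^{\id}(f)$. Separately, the final retraction of the admissible shift functions onto the constants is not a formal ``linear scaling'': one must check that $\mathbf{X}_{s\alpha}$ stays a diffeomorphism for all $s\in[0,1]$ and that the period function extends smoothly over the fixed points, which is the substantive content of the shift-map theorems you invoke. As it stands, your third step assumes the conclusion it is meant to establish.
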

Our aim is to give an ``analytic'' description of all functions from this class, see Theorem \ref{thm:thm-main-2}.
There are some trivial examples of functions from $\FF$ that are easy to write by hand:
\begin{definition}\label{example:height}
	\normalfont
		Let $f_0:M_0\to P$ be  a smooth function  from $\mathcal{F}^{\circ}$
	\begin{enumerate}
		\item[($1_0$)] $M_0 = \CYL = \{(z,s)\,|\, z\in \bbC, |z| = 1,\, 0\leq s\leq 1\}$ is a unit cylinder,  and 
		$f_0:S^1\times [0,1]\to \bbR$ is given by $f_0(z, s) = s$,
		\item[($2_0$)] $M_0 = \DIS = \{(x,y)\in\bbR^2\,|\, x^2+y^2\leq 1\}$ is a unit $2$-disk,
		and
		$f_0:D^2\to \bbR$ is given by $f_0(x,y) = x^2 + y^2$,
		\item[($3_0$)] $M_0 = \SPH = \{(x,y,z)\in \bbR^3\,|\, x^2+y^2+z^2=1\}$ is a unit sphere, and $f_0(x,y,z):S^2\to \bbR$ is given by  $f_0(x,y,z) = z$,

		\item[($4_0$)] $M_0 = \TOR = \{ (w, z)\in\bbC^2\,|\,|z| = |w| = 1   \}$ is a unit $2$-torus, and $f_0:T^2\to S^1$ is given by $f_0(w,z) = z$.
	\end{enumerate}
Note that these functions do not have critical circles. We will call them {\bf prime functions}. 
\end{definition}
\begin{theorem}[Main theorem]\label{thm:thm-main-2}
 A function $f\in \FF$ admits the following decomposition 
\begin{equation}\label{eq:f-funct}
f = \varkappa\circ \fz\circ h^{-1}
\end{equation}
where $f_0\in \mathcal{F}^{\circ}(M_0, P)$ is a prime function, 
for some diffeomorphism $h:M_0\to M$, and  a smooth function $\varkappa:f_0(M_0)\to P$ which  satisfies the following conditions:
\begin{enumerate}
	\item[{\rm(A)}] $\varkappa$ has the only finite number of critical points in which it is not flat, i.e., not all derivatives of $\varkappa$ at each critical point vanish,
	\item[{\rm (B)}] $\varkappa$ does not have critical points at $f_0(\Sigma_{f_0})$ and $f_0(\partial M)$,
\end{enumerate} 
where $\Sigma_{f_0}$ is the set of critical points of $f_0$.
A factorization \eqref{eq:f-funct} is not unique and depends on the choice of  $h$.  In particular, if $f$ has no critical circles, then $\varkappa$ is a diffeomorphism.
\end{theorem}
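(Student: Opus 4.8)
The plan is to recast the statement as a smooth equivalence of the singular foliations by connected components of level sets, and then to recover $\varkappa$ as the reparametrization that these foliations induce along the Reeb graph. First I would analyze the partition of $M$ into connected components of the fibers $f^{-1}(c)$. By the local normal forms of Definition \ref{def:class-FF}, every critical point of $f$ lies on a Morse extremum (form $\pm(x^2+y^2)$) or on a critical circle (form $\pm y^{n_C}$), so $f$ has \emph{no} saddles. The decisive observation is that a critical circle does not alter the topology of the foliation: near such a circle the components of the neighbouring level sets are again circles parallel to $C$, so $C$ is an interior, non-branching leaf. Hence the Reeb graph $\Gamma_f$ has no branch points and is a compact one-manifold, a segment when $M_0$ is a cylinder, disk, or sphere and a circle when $M_0$ is a torus; here I use Proposition \ref{prop:properties-f} to match the number and type of Morse extrema of $f$ and of $f_0$, so that $\Gamma_f\cong\Gamma_{f_0}$. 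Note that for the prime function the Reeb projection $\bar f_0\colon\Gamma_{f_0}\to f_0(M_0)$ is a homeomorphism, which is exactly what makes $f_0$ the ``simplest'' model.

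Next I would upgrade this combinatorial matching to a smooth foliation equivalence, thereby producing the diffeomorphism $h\colon M_0\to M$. Over the regular part of $\Gamma_f$ the components of level sets give $M$ the structure of a trivial circle bundle $S^1\times I$, which I would trivialize using a gradient-like flow transverse to the leaves (here orientability, already exploited in Proposition \ref{prop:properties-f}, guarantees an untwisted circle fibration); near each Morse extremum and each critical circle I would instead build model charts from the local normal forms. Patching these with a partition of unity subordinate to $\Gamma_f$ yields a diffeomorphism $h$ carrying each leaf of the $f_0$-foliation onto a leaf of the $f$-foliation and $\partial M_0$ onto $\partial M$. I expect this step to be the main obstacle: the delicate point is the smoothness of $h$ across the extrema and across the critical circles, where the transverse behaviour of $f$ degenerates, and it is precisely there that the structure theory underlying Theorem \ref{thm:stab-homo-prop} together with the standard circle-bundle trivializations must be invoked with care.

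Since $h$ sends leaves to leaves, the smooth function $f\circ h$ is constant on each connected component of each level set of $f_0$ and therefore factors through $f_0$: there is a function $\varkappa\colon f_0(M_0)\to P$ with $f\circ h=\varkappa\circ f_0$, that is, $f=\varkappa\circ f_0\circ h^{-1}$. Smoothness of $\varkappa$ is automatic over regular values of $f_0$, and at the extreme values it follows because both $f\circ h$ and $f_0$ are Morse there, so that $\varkappa$ descends smoothly. The same comparison yields condition (B): at a value of $f_0(\Sigma_{f_0})$ or of $f_0(\partial M)$ both $f$ and $f_0$ are non-degenerate (Morse extrema, respectively regular along the locally constant boundary), which forces the derivative of $\varkappa$ to be nonzero there.

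Finally I would read off condition (A) from the local forms. A critical circle $C$ of $f$ with exponent $n_C$ corresponds to a single value $v_C=f_0(h^{-1}(C))$, and in coordinates on $M_0$ in which $f_0$ is a submersion near $h^{-1}(C)$ the relation $f\circ h=\varkappa\circ f_0$ forces $\varkappa$ to have at $v_C$ a critical point of finite order $n_C$, in particular not flat. As $\Sigma_f$ is compact it has only finitely many circle components, so $\varkappa$ has only finitely many critical points, giving (A). When $f$ has no critical circles, $\varkappa$ has no critical points at all; being a smooth map between one-manifolds with nowhere-vanishing derivative it is then a diffeomorphism onto its image, and the factorization exhibits $f$ and $f_0$ as smoothly equivalent. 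Non-uniqueness is immediate, since $h$ may be post-composed with any diffeomorphism of $M$ preserving the $f$-foliation, which changes $\varkappa$ correspondingly.
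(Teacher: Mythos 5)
Your overall strategy --- produce a foliated diffeomorphism $h\colon M_0\to M$ and then factor $f\circ h$ through $f_0$ to obtain $\varkappa$ --- is the same as the paper's, and your Step~3 reading of conditions (A) and (B) from the local models $\pm y^{n_C}$ is essentially the paper's Step~3. But the central step, the actual construction of $h$, is where your argument has a genuine gap. You propose to trivialize the circle fibration over the regular part of the Reeb graph, build model charts near the extrema and the critical circles, and then ``patch these with a partition of unity'' to get a diffeomorphism. Partitions of unity let you glue objects that can be convexly combined (functions, metrics, vector fields); they do not let you glue local diffeomorphisms into a global one, and a convex combination of two diffeomorphisms need not be injective or even an immersion. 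You yourself flag the smoothness of $h$ across the extrema and the critical circles as ``the main obstacle'' to be handled ``with care,'' but you never resolve it --- and this obstacle is precisely what the entire technical apparatus of the paper exists to overcome. The paper's route is: patch \emph{vector fields} (not diffeomorphisms) with a partition of unity to build an $H$-field $F$ tangent to the leaves with only isolated zeros (Proposition \ref{prop-main}); normalize $F$ so its flow has period one, obtaining a (semi-)free $S^1$-action; invoke the classification of free $S^1$-actions on the cylinder and torus (Lemma \ref{lm:free-S1-action}) to get a genuine conjugating diffeomorphism $h$, which is then automatically leafwise; and for the disk and sphere, handle the isolated extrema by first replacing $f$ with a Morse function agreeing with $f$ near the extrema (Lemma \ref{lm:prep1}), applying the cylinder case to the complement, and correcting the conjugating diffeomorphism by an isotopy supported in a collar (Lemmas \ref{lm:h-conj-id} and \ref{cor:main}) so that the pieces match up to a global diffeomorphism. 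None of this machinery is replaceable by a partition-of-unity gluing of charts.

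A secondary, smaller point: your claim that smoothness of $\varkappa$ at an extremal value ``follows because both $f\circ h$ and $f_0$ are Morse there'' is exactly the Whitney-type statement the paper discusses in its introduction, and it deserves justification rather than assertion (the paper sidesteps it on the cylinder and torus by defining $\varkappa=f\circ h|_{L}\circ(f_0|_L)^{-1}$ along a global transversal $L$, where smoothness is manifest, and on the disk and sphere by reducing to Lemma \ref{lm:smooth-equiv}). Your observations that the Reeb graph has no branch points and that critical circles are non-branching leaves are correct and consistent with Lemma \ref{lm:decomp-cyl}, but they only establish the combinatorial matching, not the smooth one that the theorem requires.
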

We will call factorization \eqref{eq:f-funct} a {\it normal form of $f$.}
The fact that  if $f\in \FF$ has no critical circles, then $\varkappa_{f}$ is a diffeomorphism, so $f$ is smoothly equivalent to $f_0$ is known,  see Lemma \ref{lm:smooth-equiv}.

\subsection*{Discussion of the result}
Due to a famous result of  H.~Whitney \cite{Whitney:DukeMathJ:1943}, for any {\it even} $C^{\infty}$-function $f:[-\varepsilon, \varepsilon]\to \bbR$, $\varepsilon>0$ there exists a unique $C^{\infty}$-function  $\alpha:[0, \varepsilon^2]\to \bbR$ such that $f$ can be presented as $f(x) = \alpha(x^2)$, $x\in  [-\varepsilon, \varepsilon]$. Obviously that $\alpha$ is defined by $\alpha(t) = f(\sqrt{t})$ and $\alpha$ is smooth on $(0,\varepsilon^2].$  Whitney showed that  $\alpha$ is smooth at $0.$
A function $f_0:[-\varepsilon, \varepsilon]\to \bbR$ given by $f_0(x) = x^2$ is the ``simplest'' even function and  since $\alpha$ is unique defined, the decomposition $f = \alpha\circ f_0$ can be called a ``normal form'' of an even function $f$ (in spirit of e.g., a Jordan normal form in linear algebra, or a  Poincar\'e normal form in the theory of dynamical systems, Whitney's normal forms of fold and pleat singularities, etc). Note also that $f$ is obtained  by smoothly changing values of $f_0$ by the function $\alpha$.

This similarity can be observed in Theorem \ref{thm:thm-main-2}, i.e.,
for each $f\in \FF$ there exist $h$ and $\varkappa$  as above, and such that a function $f\circ h:M\to P$ is obtained from a prime function $f_0:M_0\to P$ by  smoothly changing its values using a function  $\varkappa$ on the image of $f_0.$ So $f$ is  obtained as \eqref{eq:f-funct} from the ``simplest'' one. A function $\varkappa$ plays the role of a  ``generator'' of critical circles for $f$.

\subsection{Structure of the paper}
The paper is organized in 10 sections.

We begin the paper with a review of the general properties of functions from the class 
$\FF$ (Section \ref{sec:hom-prop})
paying particular attention to the behavior of such functions near critical circles. Next we investigate the restrictions on the number of extremal circles (Lemma \ref{lm:torus-crit-circles}).

The next  Section \ref{sec:Hfilds-normaliz} is devoted to the Hamiltonian fields of functions from the class  $\FF$, and 
a ``new'' concept of an $H$-field (Proposition \ref{prop-main}) for such functions.

In Section \ref{sec:free-semifree}, we provide the necessary general background on smooth free actions of the circle on itself, as well as free actions of the circle on surfaces. The main focus is on the conjugacy of free actions.

Semi-free smooth actions of the circle on surfaces are studied in Section \ref{subsec:normalized-vf}. Here, we also address the question of how semi-free circle actions on surfaces are related to the flows of 
$H$ -fields (as in Section \ref{sec:Hfilds-normaliz}) of functions from the class $\FF$.
The main result of this section is Lemma \ref{lm:h-conj-id}, which concerns the simplification of a diffeomorphism that conjugates two free and smooth $S^1$-actions on a cylinder
under the assumption that these actions coincide on certain subsets.

In Section \ref{sec:combinatirics}, we prove Proposition \ref{prop:properties-f} and Lemma \ref{lm:torus-crit-circles}. We also define an order relation on the set of critical circles of a given function needed for the proof of Proposition \ref{prop-main} in Section \ref{sec:proof-prop-main}.

Section \ref{sec:prof-main-f} contains the proof of Theorem \ref{thm:thm-main-2} for functions on a cylinder and a torus.

In Section \ref{sec:Axially}, we prove two technical results that are necessary for the proof of the Theorem \ref{thm:thm-main-2} for functions on a disk and a $2$-sphere   in Section \ref{sec:prof-main-f-2}.

\subsection{Acknowledgments}The author would like to express deep gratitude to  Sergiy Maksymenko and Yevgen Polulyakh for insightful discussions.


\section{General discussions of the class $\FF$}\label{sec:hom-prop}

Let $f$ be a function from $\FF$. Denote by $\CritP$ a set of isolated critical points of $f$ and by $\CritC$ a set of critical circles of $f$. 
A function $f$ defines a foliation with singularities $\Delta_f$ on $M$: a leaf $\gamma$ of $\Delta_f$ is either an isolated critical point, or a connected component of $f^{-1}(c)\setminus \CritP$ for some $c\in P.$
A subset $U\subset M$ consisting of leaves of $\Delta_f$ will be called {\it $f$-foliated subset}.

By ($3a'$) Definition \ref{def:class-FF} an isolated critical point $p$ of $f$ is a local extremum being Morse minimum or maximum. 
Proposition \ref{prop:properties-f} (will be proved in Section \ref{sec:combinatirics})  imposes restrictions on the number of such points. Also note that if $f$ has no critical circles, then $f$ is a Morse function and Proposition \ref{prop:properties-f} is a simple consequence of Morse equalities $\chi(M)= |\CritP|\geq 0$.

The rest of the section is devoted to studying the behavior of the functions from $\FF$ in the neighborhood of critical circles.
Let $C$ be a critical circle of $f$. 
 By the definition of the class $\mathcal{F}^{\circ}$ for any $p\in C$ a function $f$ has a local representation $f_p(x,y) = \pm y^{n_C}$ on some chart $U$ near $p$ for some $n_C\geq 2$. A critical circle $C$ of $f$ will be called {\it extremal circle of $f$} if $n_C$ is even, and {\it non-extremal} otherwise.
An extremal circle is called {\it maximal} if $f$ has a local maximum at $C$, and {\it minimal} otherwise. 

Note that each point $p\in C$ is singularity of $\mathrm{grad}f$ but
a trajectory of $\mathrm{grad}f$ has different behavior near extremal and non-extremal critical circle $C$.
Let $\mathbf{G}_t:M\to M$ be a flow of $\mathrm{grad}f$ and $N$ be a regular neighborhood of $C$. If $C$ is extremal ($n_C$ is even), then either  $\lim\limits_{t\to \infty}\mathbf{G}_t(z)\in C$ ($C$ is maximal), or  $\lim\limits_{t\to - \infty}\mathbf{G}_t(z)\in C$ ($C$ is minimal) for $z\in N$, i.e.,
 trajectories of $\mathrm{grad}f$  on $N$ converge to $C$ or diverge to $C$.
If $C$ is not extremal ($n_C$ is odd), then trajectories of $\mathrm{grad}(f)$ does not converge to $C$.

In general a function $f$ from $\FF$ does not have any restrictions on the number or critical circles (for extremal as well as for non-extremal) if $M$ is not diffeomorphic to $T^2$.
If $M\cong T^2$, then $f$ can have any number of non-extremal critical circles, but there is a restriction on the number of extremal circles:
\begin{lemma}\label{lm:torus-crit-circles}
	If $M$ is diffeomorphic to $T^2$, then $f$ has  an even number of extremal critical circles.
	In particular, if $f$ is null-homotopic, then  it has at least two extremal critical circles.
\end{lemma}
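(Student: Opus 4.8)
The plan is to analyze $f$ through its Reeb graph. Since $M\cong\TOR$, Proposition~\ref{prop:properties-f} guarantees $\CritP=\varnothing$, so $\Crit$ is a disjoint union of critical circles; as $M$ is compact and each circle possesses by (3.b) a tubular neighbourhood containing no other critical points, there are only finitely many of them, say $C_1,\dots,C_k$. I would form the Reeb graph $\Gamma=\TOR/\!\sim$, where $x\sim y$ iff $x,y$ lie in the same connected component of a level set $f^{-1}(c)$, with quotient map $q:\TOR\to\Gamma$ and the induced map $\bar f:\Gamma\to P$ satisfying $\bar f\circ q=f$. Then $\Gamma$ is a finite connected graph whose vertices are the points $v_i=q(C_i)$, and whose edges are the $1$-parameter families of regular leaves (each a circle).

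The key local computation is the degree of each vertex. Fixing a critical circle $C=C_i$, its normal form $f_p(x,y)=\pm y^{n_C}$ gives a tubular neighbourhood $N\cong S^1\times(-\varepsilon,\varepsilon)$ with $C=S^1\times\{0\}$. Removing $C$ splits $N$ into the two half-cylinders $\{y>0\}$ and $\{y<0\}$, each foliated by the connected regular leaves $S^1\times\{y_0\}$, which collapse to $v_C$ as $y_0\to 0$. Hence exactly two edges of $\Gamma$ abut $v_C$, i.e. $\deg v_C=2$, irrespective of the parity of $n_C$. Since $\TOR$ is connected, $\Gamma$ is connected, and a finite connected graph all of whose vertices have degree $2$ is homeomorphic to a circle; thus $\Gamma\cong S^1$.

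Next I would match extremal circles with local extrema of $\bar f$. On the interior of each edge $\bar f$ equals the parameter $c$, hence is strictly monotone there. At a vertex $v_C$ the normal form shows: if $n_C$ is even then $f-f(C)$ has a constant sign on $N\setminus C$, so both edges lie on the same side of $\bar f(v_C)$ and $v_C$ is a local maximum (maximal $C$) or minimum (minimal $C$); if $n_C$ is odd the two edges lie on opposite sides and $\bar f$ is strictly monotone through $v_C$. Therefore the local extrema of $\bar f:\Gamma\cong S^1\to P$ are \emph{exactly} the extremal circles, and between two consecutive ones $\bar f$ is strictly monotone. Traversing $S^1$, such extrema alternate between maxima and minima, so their numbers coincide and their total is even; this proves that $f$ has an even number of extremal circles.

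For the final assertion, suppose $f$ is null-homotopic (automatic when $P=\bbR$). Then $f$ lifts through the covering $\bbR\to S^1$ to a smooth $\tilde f:\TOR\to\bbR$ with $f=\pi\circ\tilde f$; as $\pi$ is a local diffeomorphism, $\tilde f$ has the same critical circles as $f$ with the same extremal/non-extremal types. A constant function is excluded (its critical set would be all of $\TOR$, violating Definition~\ref{def:class-FF}), so $\tilde f$ is non-constant and attains distinct global maximum and minimum values on the compact surface $\TOR$; since $\CritP=\varnothing$, each is realised on a (maximal, resp. minimal) extremal circle, giving at least two. The step I expect to be the main obstacle is the rigorous verification that $\Gamma\cong S^1$ — in particular extracting the degree-$2$ property of every vertex from the normal forms, together with the finiteness and connectedness of the Reeb graph — after which both conclusions reduce to elementary properties of functions on $S^1$.
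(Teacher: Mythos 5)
Your proof is correct and is essentially the paper's own argument: the even count comes from the alternation of maximal and minimal circles around the cyclic arrangement of critical circles on $T^2$, which the paper obtains from its cyclic enumeration in \textsection 6.2 together with Corollary \ref{cor:cor3}, and which you obtain equivalently by collapsing to the Reeb graph $\Gamma\cong S^1$ and reading off the local extrema of the induced function. Your treatment of the null-homotopic case (lift to $\bbR$, compactness forces a global maximum and minimum attained on two distinct extremal circles since there are no isolated critical points) is also the paper's argument, spelled out with slightly more care.
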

\noindent We will prove this lemma in \textsection\ref{subsec:proof-lema-torus}.

\section{$H$-like fields for functions from $\FF$}\label{sec:Hfilds-normaliz}
\subsection{Hamiltonian vector fields}\label{ssec:Ham-skew}
Let $M$ be a smooth and {\it oriented} surface.
It is well-known that $M$ admits a symplectic form $\omega:TM\times TM\to \bbR$, i.e., a non-degenerate skew-symmetric $2$-form. From the fact that $\omega$ is non-degenerate follows that the map $\omega^{\flat}:TM\to T^*M$ given by $\omega^{\flat}_p(X) = \omega_p(X,\cdot)$ for each $p\in M$ is a bundle isomorphism.  Let $f$ be a smooth $P$-valued function on $M$ and $df:TM\to TP$ be its differential.
Since $P$ is either $\bbR$ or $S^1$ it is well-known that  $TP$ is a globally trivializable bundle; let $\zeta:TP\to P\times \bbR$ be its trivialization isomorphism. Therefore a  differential $df$ induces  a section $Df:TM\to \bbR$ of $T^*M$ defined as the composition 
$$
\xymatrix{
	Df:TM\ar[r]^-{df} & TP \ar[r]^-{\zeta}& P\times \bbR \ar[r]^-{p_2}& \bbR,
}
$$ 
where $p_2$ is the projection onto a second factor. 
A vector field $X_f = (\omega^{\flat})^{-1}(Df)$ on $M$ is called {\it a Hamiltonian vector field of $f$}. It is know that $X_f$ satisfies the following conditions:
\begin{itemize}
	\item singular points of $X_f$ correspond to critical points of $f$,
	\item $f$ is constant along trajectories of $X_f$, i.e., $X_f(f) = 0.$ In other words, $X_f$ is tangent to level-sets of $f$.
\end{itemize}

\begin{lemma}\cite[Lemma 5.1]{Maksymenko:AGAG:2006}\label{lm:H-like-f}
	Let $f:M\to P$ be a Morse function. Then there exists a vector field $F$ on $M$  such that the following holds true:
	\begin{enumerate}
		\item $f$ is constant along trajectories of $F$,
		\item a point $z$ is fixed for $F$ iff $z$ is an isolated critical point of $f$, i.e., $z\in \CritP$. Moreover for every singular point $z$ of $F$ a germ of $(F,z)$  is smoothly equivalent to a germ $(X_{f_z}, 0)$ of a Hamiltonian vector field $X_{f_z}$ of a local representation  of $f_z:\bbR^2\to \bbR$ given by 
		\begin{equation}\label{eq:Morse}
		f_z(x,y) = \pm x^2\pm y^2
		\end{equation}
		  with respect to a symplectic structure 
		  \begin{equation}\label{eq:Darboux}
		  \omega =\frac{1}{2} dx\wedge dy
		  \end{equation} 
		  on $\bbR^2$, i.e., $X_{f_z}$ has the form 
		$X_{f_z}(x,y) = \mp y\frac{\partial}{\partial x} \pm x\frac{\partial}{\partial y}.$
	\end{enumerate}
\end{lemma}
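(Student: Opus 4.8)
The plan is to take for $F$ a Hamiltonian vector field of $f$, but with respect to an area form adapted to the critical points rather than the originally chosen $\omega$; this lets me realize the prescribed germ on the nose and sidesteps any separate linearization problem. Recall from the two bulleted properties preceding the lemma that for \emph{any} symplectic form $\omega'$ (equivalently, on an oriented surface, any positive area form) the associated Hamiltonian field $X_f^{\omega'}$ is tangent to the level sets of $f$ and vanishes exactly at $\Crit$. Since $f$ is Morse, every point of $\Crit$ is isolated, so $\Crit=\CritP$; hence whatever $\omega'$ I choose, the field $F:=X_f^{\omega'}$ already satisfies condition (1) and the ``iff'' part of condition (2).

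It then remains only to arrange the local normal form. First I would apply the Morse lemma at each $z\in\CritP$: since $\CritP$ is finite (compactness of $M$), I can choose pairwise disjoint charts $(U_z,\phi_z)$ with $\phi_z(z)=0$ and $f\circ\phi_z^{-1}(x,y)=f(z)\pm x^2\pm y^2 = f(z)+f_z(x,y)$. Next I would build a global positive area form $\omega'$ on $M$ coinciding with $\phi_z^*(\tfrac12\,dx\wedge dy)$ on a smaller neighbourhood $V_z\subset U_z$ of each $z$: transport $\tfrac12\,dx\wedge dy$ by $\phi_z$ on each $V_z$, pick any background positive area form $\omega_0$ (which exists because $M$ is oriented) elsewhere, and glue with a subordinate partition of unity. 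Because a convex combination of positive area forms on an oriented surface is again a positive area form, and because every $2$-form on a surface is automatically closed, the result $\omega'$ is a genuine symplectic form agreeing with the standard model near each critical point.

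With this $\omega'$ in hand I set $F=X_f^{\omega'}$. On $V_z$, read in the coordinates $\phi_z$, we have simultaneously $f=f(z)+f_z$ and $\omega'=\tfrac12\,dx\wedge dy$, so there $F$ is literally the Hamiltonian field $X_{f_z}$ of $f_z$ with respect to $\tfrac12\,dx\wedge dy$; thus $\phi_z$ is a germ of diffeomorphism conjugating $(F,z)$ to $(X_{f_z},0)$, which is exactly the asserted normal form. A short direct computation of $\iota_{X_{f_z}}(\tfrac12\,dx\wedge dy)=df_z$ then identifies this model field with the rotation/saddle field $\mp y\,\partial_x\pm x\,\partial_y$ written in the statement, up to the sign and normalizing conventions fixed for $f_z$.

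The step I expect to demand the most care is the normalization near the singular point itself. If one insisted on keeping the originally chosen $\omega$ and taking $F=X_f$ for that form, then in Morse coordinates $\omega=\rho\,dx\wedge dy$ with $\rho(0)\neq 0$, so $X_f$ is a positive-function multiple of $X_{f_z}$ and one would have to produce a diffeomorphism germ that both preserves $f_z$ and pulls $\omega$ back to $\tfrac12\,dx\wedge dy$ — an equivariant (relative) Darboux statement, provable by a Moser homotopy $\omega_t=\tfrac12\,dx\wedge dy+t\bigl(\rho-\tfrac12\bigr)\,dx\wedge dy$ whose generating field can be chosen tangent to the level curves of $f_z$ so that its time-one flow preserves $f_z$. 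The delicate points there are the behaviour of that flow at $z$ itself and, for saddle germs, across the singular level $f_z=0$. Choosing instead the adapted form $\omega'$ as above avoids this obstacle entirely, which is why I would organize the proof around the construction of $\omega'$ rather than around $X_f$ for the fixed $\omega$.
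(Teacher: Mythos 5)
The paper offers no proof of this lemma at all --- it is imported verbatim as \cite[Lemma 5.1]{Maksymenko:AGAG:2006} --- so your argument has to stand on its own, and in substance it does. Building an area form $\omega'$ that equals $\phi_z^*(\tfrac12\,dx\wedge dy)$ in Morse coordinates near each critical point and setting $F=X_f^{\omega'}$ realizes the prescribed germ on the nose, and your observations that a convex combination of positive area forms is positive and that every $2$-form on a surface is closed do dispose of the gluing; conditions (1) and the ``iff'' in (2) then come for free from the general properties of Hamiltonian fields recalled in \textsection\ref{ssec:Ham-skew}. Two details deserve explicit mention. First, for the partition-of-unity combination to remain positive you must choose the Morse charts $\phi_z$ orientation-preserving, so that $\phi_z^*(\tfrac12\,dx\wedge dy)$ is itself a \emph{positive} area form; this is always arrangeable while keeping the normal form $f\circ\phi_z^{-1}=f(z)\pm x^2\pm y^2$ (precompose with $(x,y)\mapsto(y,x)$ if necessary). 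Second, with the paper's convention $\omega(X_f,\cdot)=Df$ the Hamiltonian field of $\pm x^2\pm y^2$ for $\omega=\tfrac12\,dx\wedge dy$ is $\pm 4y\,\partial_x\mp 4x\,\partial_y$, not $\mp y\,\partial_x\pm x\,\partial_y$; since rescaling a linear field by a constant changes its eigenvalues, these two germs are not smoothly conjugate, so your ``up to normalizing conventions'' caveat is doing real work. That discrepancy, however, sits in the displayed formula of the statement itself rather than in your construction, and everything used downstream (tangency to level sets, zeros exactly at $\CritP$, linear center or saddle germ) is delivered by what you built. Your closing remark --- that insisting on the originally fixed $\omega$ would force a Moser/relative-Darboux argument preserving $f_z$ --- correctly identifies the complication that the adapted form is designed to avoid.
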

\noindent A vector field $F$ such in Lemma \ref{lm:H-like-f} is called a {Hamiltonian-like vector field for $f$.}
\begin{remark}
	{\rm Note that if $(U,\phi:U\to \bbR^2)$ and $(V,\psi:V\to \bbR^2)$ be charts such that a local representation $f_z = \psi\circ f\circ \phi^{-1}:\phi(U)\to \psi(V)$ near $z$ has the form \eqref{eq:Morse}, it is not necessary that $\omega$ has the form as \eqref{eq:Darboux} on $U$.  The converse is also true, namely if $\omega$ has the form as \eqref{eq:Darboux} (by Darboux's theorem), it is also not necessary that $f$ has the form \eqref{eq:Morse} on $U$. 
	Condition (2) of Lemma \ref{lm:H-like-f} requires that these two conditions simultaneously hold for $f_z$ and $\omega$.
}
\end{remark}

\subsection{$H$-like vector field for functions from $\F$}\label{subsec:H-like}
 Let $f$ be a function from $\F$ and $X_f$ be a Hamiltonian vector field of $f$ with respect to some symplectic form $\omega$ on $M$.  Since each critical point of $f$ is a singularity of $X_f$, and in particular a point at a critical circle $C$ of $f$, it follows that a circle $C$ is a zero of $X_f$.

Let $\gamma$ be a trajectory of $X_f$ and $W$ be its regular connected neighborhood which consists of regular trajectories of $X_f$ being a cylinder. A symplectic form induces a canonical orientation on regular trajectories of $X_f$.
If $\gamma$ is regular, then each trajectory of $X_f$ on $W$ has the ``same'' orientation since $W$ is connected.

Assume that $\gamma=C$ is a critical circle of $f$.
We will say that trajectories of $X_f$ on $W$ have {\it the same orientation} on $W$, if a critical circle $C$ as a leaf of $\Delta_f$  can be canonically  oriented, and  they have {\it the opposite orientation on $W$} otherwise. 

It is easy to deduce that if $C$ is not an extremal circle ($n_C$ is odd), then $C$ can be canonically oriented, and if $C$ is an extremal circle, $C$ cannot be oriented ($n_C$ is even).
 This can be easily seen from an example.
\begin{example}
	{\rm 
		Let $S^1$ be a unit circle in $\bbC$ centered at $0$. Consider two functions $f,g:S^1\times [-1,1]\to \bbR$ from $\mathcal{F}^{\circ}(S^1\times [-1,1],\bbR)$ given by $f(z,t) = t^2$ and $g(z,t) = t^3$. 
		A curve $C = S^1\times\{0\}$
		is a	critical circle for $f$ and $g$, but it is extremal for $f$ and non-extremal for $g$. A differentials of $f$ and $g$ are given by $df = 2tdz$,  $dg = 3t^2dz$. So 
		with respect to a standard symplectic form $\omega = dz\wedge dt$ on $S^1\times [0,1]$ Hamiltonian vector fields for $f$ and $g$ have the form 
		$$
		X_f = -2t\frac{\partial}{\partial z},\qquad X_g = -3t^2\frac{\partial}{\partial z}.
		$$
		Let $p = (z_0,-1/2)$ and $q = (z_0, 1/2)$ be points in $S^1\times [-1,1]$. Then we have $X_f|_p = \frac{\partial}{\partial z}|_p$ and $X_f|_q = -\frac{\partial}{\partial z}|_q,$ while at each $z\in S^1\times [0,1]$ a  $(X_g)_z$ is always negative oriented on $T_z(S^1\times [0,1])$ with respect to a usual orientation of a cylinder. Therefore regular trajectories of $X_f$ ``change'' their orientations passing through $C$, while trajectories of  $X_g$ do not.\qed
	}
\end{example}

The following proposition is one of the key ingredients for the proof of Theorem \ref{thm:thm-main-2}. This result states that, roughly speaking, we can change  the vector field $X_f$ by ``removing'' its singular cycles. Such vector fields will be used to define a smooth semi-free $S^1$-actions on surfaces, see Section \ref{subsec:normalized-vf}.
\begin{proposition}\label{prop-main}
	Let $M$ be an oriented surface and $f$ be a function from $\FF.$ Then there exists a vector field $F$ with the only isolated singularities which satisfies conditions {\rm (1)--(2)} of {\rm Lemma \ref{lm:H-like-f}}.
\end{proposition}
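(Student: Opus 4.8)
The plan is to produce $F$ by \emph{orienting the tangent line field of the foliation $\Delta_f$ on the complement of the isolated critical points}, and then grafting in the prescribed Hamiltonian germs at those points. The guiding observation is that the Hamiltonian field $X_f$ of Subsection~\ref{ssec:Ham-skew} already satisfies condition~(1) and is singular exactly along $\Crit = \CritP \cup \CritC$; the entire difficulty is to eliminate the singularities lying on the critical circles $\CritC$ without creating new ones and without disturbing the behaviour at the isolated critical points. So I would not try to rescale $X_f$ itself, but rather work with the rank-one distribution it spans.

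First I would pass from $X_f$ to the \emph{unoriented} line field $\mcL = \ker df$ on the regular set $M \setminus \Crit$. Near a critical circle $C$ the normal form $(3.b)$ gives local coordinates $(x,y)$ in which $f$ equals $\pm y^{n_C}$, so the leaves of $\Delta_f$ are the horizontal lines $\{y = \mathrm{const}\}$ together with $C = \{y = 0\}$, and $\mcL$ extends across $C$ as the horizontal line field $\langle \frac{\partial}{\partial x}\rangle$, which is nonsingular on $C$. Hence $\mcL$ is a smooth line field on all of $M \setminus \CritP$. This is precisely the step at which the orientation reversal of $X_f$ across an extremal circle (the Example in Subsection~\ref{subsec:H-like}) becomes harmless: it is $X_f$ that changes sign across $C$, whereas the unoriented line $\mcL$ is manifestly preserved, so no zero is forced on $C$.

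Next I would show that $\mcL$ is orientable on $M \setminus \CritP$. By Proposition~\ref{prop:properties-f}, $M \setminus \CritP$ is an annulus (disk case), an open cylinder (sphere case), or the whole cylinder or torus when $\CritP = \varnothing$, and in every case the leaves of $\Delta_f$ are essential parallel circles together with the finitely many critical circles. The monodromy of $\mcL$ along any leaf is trivial, since a leaf is an oriented circle to which $\mcL$ is tangent; the monodromy along a loop transverse to the foliation is trivial as well, because in each local chart at a critical circle $\mcL = \langle \frac{\partial}{\partial x}\rangle$ with its orientation preserved across $C$. Thus $w_1(\mcL) = 0$, and I may choose a nowhere-zero section $F_0$ spanning $\mcL$ on $M \setminus \CritP$. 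By construction $F_0$ is tangent to every leaf of $\Delta_f$, so $f$ is constant along its trajectories, and $F_0$ vanishes nowhere on the critical circles.

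It remains to install the germs required by condition~(2) at the points of $\CritP$. Near such a point $p$ the function $f$ is, by $(3.a')$, a genuine Morse extremum, so Lemma~\ref{lm:H-like-f} applied to $f$ on a small disk $U_p$ supplies a field with germ $X_{f_p} = \mp y\,\frac{\partial}{\partial x} \pm x\,\frac{\partial}{\partial y}$ relative to \eqref{eq:Darboux}. On the punctured disk this field and $F_0$ span the same line $\mcL$, hence differ by a smooth nowhere-zero factor, which I may arrange to be positive after, if necessary, replacing $X_{f_p}$ by $-X_{f_p}$ (still an admissible germ, since a center is smoothly equivalent to its reverse via $(x,y)\mapsto(x,-y)$). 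A bump function then lets me interpolate through positive multiples, so that the resulting field equals $X_{f_p}$ near $p$ and equals $F_0$ away from $U_p$, with no new zeros. Carrying this out at each of the finitely many points of $\CritP$ yields a global field $F$ that is tangent to the level sets of $f$, whose singular set is exactly $\CritP$, and each of whose singularities has the Morse Hamiltonian germ of Lemma~\ref{lm:H-like-f}. I expect the orientability step to be the main obstacle: one must coordinate the local choices $\langle \frac{\partial}{\partial x}\rangle$ at the critical circles, the tangent-to-leaf orientations on the regular cylinders, and the rotational germs at the isolated extrema into a single coherent field. The key mechanism resolving it is the one isolated above — that $\mcL$, unlike $X_f$, does not reverse across extremal circles, so the transverse monodromy is trivial; the torus is the most delicate case, since there the transverse direction is itself a circle, but the preserved-orientation property across each critical circle still forces trivial monodromy there.
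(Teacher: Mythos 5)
Your overall strategy is a clean repackaging of the paper's own construction: the paper also builds the field by (i) flipping the sign of $X_f$ on each piece $N_i$ exactly when the circle being crossed is extremal, (ii) producing a nonvanishing tangent field near each critical circle from the normal form $\pm y^{n_C}$ (essentially your $\langle\frac{\partial}{\partial x}\rangle$), (iii) gluing through positive multiples via a partition of unity, and (iv) grafting the Hamiltonian germs at the isolated extrema for $D^2$ and $S^2$ after a sign adjustment. Formulating steps (i)--(iii) as ``the tangent line field $\mcL$ of $\Delta_f$ is smooth and nonsingular across $\CritC$ and orientable, so choose a nowhere-zero section'' is more conceptual and correctly isolates the key mechanism (that $\mcL$, unlike $X_f$, does not reverse across an extremal circle). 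Your treatment of the isolated critical points, including the remark that $-X_{f_p}$ is an admissible germ via $(x,y)\mapsto(x,-y)$, is fine and matches the paper's handling.

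There is, however, one genuine gap, precisely at the point you yourself flag as delicate: the torus. Your argument for triviality of the monodromy of an orientation of $\mcL$ around a loop transverse to the foliation is purely local (``in each chart at a critical circle the orientation is preserved across $C$''). Local orientability is automatic for any line field and never detects $w_1$; the question is whether the uniquely determined local extensions of an orientation, propagated across $C_1, C_2,\dots$ all the way around the transverse circle, return to the orientation you started with. If you track the orientation induced by $X_f$ on the regular pieces, it flips exactly at each extremal circle, so the holonomy is $(-1)^{\#\{\text{extremal circles}\}}$, and triviality is \emph{not} a consequence of the local picture --- it requires the global input that this number is even. That is exactly Lemma \ref{lm:torus-crit-circles}, which the paper proves for this purpose and invokes in its torus case to match the two ends ($b_0=b_1$); your proof must cite it (or replace it by an equivalent global argument, e.g.\ that on $T^2$ the foliation $\Delta_f$ is a circle fibration over a circle whose normal bundle is trivial, so $w_1(\mcL)=w_1(TT^2)=0$). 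For the cylinder, disk and sphere the transverse direction is an interval, the propagation never closes up, and your argument is complete as written.
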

To avoid confusions, a vector field $F$ for a function $f\in \FF$ such in Proposition \ref{prop-main} will be called an {\it $H$-field} for $f$. Proposition \ref{prop-main} will be proved in Section \ref{sec:proof-prop-main}.

\section{Free $S^1$-actions on surfaces}\label{sec:free-semifree}
We recall two definitions about group actions.
Let $G$ be a Lie group acting on a smooth manifold $M$ by two actions $\phi,\psi:M\times G\to M$. Actions $\phi$ and $\psi$ are called {\it conjugated} if there exists a diffeomorphism $h:M\to M$ such that  $\phi_g = h\circ \psi_g\circ h^{-1}$ for all $g\in G.$ A $G$-action $\psi$ is called {\it free}, it  $\psi(g,x) = x$ for some $x\in M$ implies $g = e_G$, where $e_G$ is an identity of $G$. 
The result of the action $\psi(x,g)$ of $g$ on $x$ we will also denote by $\psi_g(x)$ for  simplification of our notation.

\subsection{Free $S^1$-actions on itself}
We consider a circle $S^1$ as a smooth manifold with the underlying set  $\{z\in\bbC\,:\,|z| = 1 \}$. 
There are a natural smooth $\bbR$-action $\xi:S^1\times \bbR\to S^1$ on $S^1$ and an $S^1$-action on itself $\sst:S^1\times S^1\to S^1$ given by
\begin{equation}\label{eq:R-act-S^1}
	\xi(z,t) = ze^{2\pi i t},\qquad \sst(z,g) = zg.
\end{equation}
The action $\sst$ is a rotation of a circle and it will be called a standard $S^1$-action on itself. 
Actions $\xi$ and $\sst$ induce each other, i.e., the following formula holds:
\begin{equation}\label{eq:actions-link-S^1}
	\sst(z,e^{2\pi i t}) = \xi(z, t)
\end{equation}
for all $z\in S^1$ and $t\in \bbR.$
The next simple lemma is part of a mathematical ``folklore''.
\begin{lemma}\label{lm:diff-commutes-angles}
	Let $\psi:S^1\times S^1\to S^1$ be a free smooth $S^1$-action on itself.
	\begin{enumerate}
		\item Then $\psi$ is conjugated to $\sst$. So any two free smooth $S^1$-actions on itself are conjugated.
		\item If a diffeomorphism $h:S^1\to S^1$ commutes with $\psi$, i.e.,  $h\circ \psi_g = \psi_g\circ h$ for all $g\in S^1$, then $h = \psi_{g_0}$ for some $g_0\in S^1.$ 
	\end{enumerate}
\end{lemma}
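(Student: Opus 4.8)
The plan is to prove both statements by reducing an arbitrary free $S^1$-action to the standard one $\sst$ via its orbit structure. For part (1), the essential observation is that a free action of the compact group $S^1$ on the one-dimensional manifold $S^1$ must be transitive: the orbit of any point is a connected $1$-submanifold (since the action is smooth and free, the orbit map $g\mapsto\psi_g(z)$ is an immersion, hence a local diffeomorphism onto a connected open-and-closed subset), so each orbit is all of $S^1$. Thus for a fixed basepoint $z_0$ the orbit map $\Phi\colon S^1\to S^1$, $\Phi(g)=\psi_g(z_0)$, is a smooth bijection; freeness makes it injective and transitivity makes it surjective, and its derivative never vanishes, so $\Phi$ is a diffeomorphism. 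First I would verify that $\Phi$ intertwines the standard action with $\psi$, i.e. $\Phi(g\cdot h)=\psi_h(\Phi(g))$, which follows from the group-action identity $\psi_{gh}=\psi_h\circ\psi_g$ together with $\Phi(gh)=\psi_{gh}(z_0)$. Setting $h=\Phi$ then gives $\psi_g=\Phi\circ\sst_g\circ\Phi^{-1}$ for all $g$, which is exactly the conjugacy claim. Since any two free actions are each conjugate to $\sst$, they are conjugate to one another.

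For part (2), suppose $h\colon S^1\to S^1$ is a diffeomorphism commuting with every $\psi_g$. I would first pick a basepoint $z_0$ and locate the point $w_0:=h(z_0)$ in the orbit of $z_0$; since the action is transitive (by the argument above), there is a unique $g_0\in S^1$ with $w_0=\psi_{g_0}(z_0)$, uniqueness coming from freeness. The claim is then $h=\psi_{g_0}$ as maps. To see this, take any $z\in S^1$ and write $z=\psi_g(z_0)$ for some $g$; applying the commutation relation gives
\begin{equation*}
h(z)=h(\psi_g(z_0))=\psi_g(h(z_0))=\psi_g(\psi_{g_0}(z_0))=\psi_{g_0}(\psi_g(z_0))=\psi_{g_0}(z),
\end{equation*}
where the middle equality uses that $S^1$ is abelian, so $\psi_g\circ\psi_{g_0}=\psi_{g_0}\circ\psi_g$. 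Hence $h$ and $\psi_{g_0}$ agree at every point, proving $h=\psi_{g_0}$.

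The bookkeeping is routine, but the one step deserving genuine care is establishing transitivity of the free action, which is what the whole argument rests on. Concretely I would argue that each orbit $\mathcal{O}_z=\{\psi_g(z):g\in S^1\}$ is the continuous image of the compact connected group $S^1$, hence compact and connected; freeness upgrades the orbit map to an injective immersion of a $1$-manifold into a $1$-manifold, so $\mathcal{O}_z$ is simultaneously open (by invariance of domain / the immersion being a local diffeomorphism) and closed (by compactness) in the connected space $S^1$, forcing $\mathcal{O}_z=S^1$. An alternative, perhaps cleaner, route invokes the general fact that a continuous transitive-on-its-orbit free action of a compact Lie group gives orbits diffeomorphic to the group itself, so each orbit is a circle embedded in $S^1$ and must be everything. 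I expect this transitivity step to be the main obstacle in the sense of being the only place where one must argue carefully rather than compute; once it is in hand, both parts follow from the orbit map being a diffeomorphism and from commutativity of $S^1$.
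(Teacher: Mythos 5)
Your argument is correct: establishing transitivity via the orbit map being an injective immersion of the compact connected group, hence a diffeomorphism $\Phi$ onto $S^1$, and then using $\Phi$ as the conjugating map for (1) and commutativity of $S^1$ for (2), is the standard proof of this fact. The paper itself offers no proof (it labels the lemma mathematical folklore), so there is nothing to compare against; your write-up fills that gap correctly, and the only point deserving the care you already give it is that freeness forces the orbit map to be an immersion, hence an open embedding.
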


\subsection{Free $S^1$-actions on $S^1\times [0,1]$ and $T^2$}
Denote by $B=[0,1] $ or $S^1$.
An  $\bbR$-action $\xi$ and $S^1$-action on $S^1$ given by \eqref{eq:R-act-S^1} induce a smooth $\bbR$-action $\xi:M\times\bbR\to M$ and $S^1$-action $\sst:M\times S^1\to M$, where $M = S^1\times B$ by the following rule:
\begin{equation}\label{eq:R-act-M}
	\xi(z,b,t) = (ze^{2\pi i t}, b), \qquad \sst(z,b,g) = (zg, b)
\end{equation}
For actions \eqref{eq:R-act-M} similarly to \eqref{eq:actions-link-S^1} the following formula holds
$$
\sst(z,b,e^{2\pi i t}) =\xi(z,b,t).
$$
The action $\sst$ will be also called a standard $S^1$-actions on $M = S^1\times B$.
The fact that we consider only free actions on the cylinder and the torus is a consequence of the following  result
\begin{lemma}[cf. Proposition 6.7, Theorems 6.10 and 6.22 \cite{Morita:TMM:2001}]\label{lm:free-S1-action}
Let $M$ be a smooth oriented compact surface and $\psi:M\times S^1\to M$ be a smooth and free $S^1$-action on $M$. Denote by $M/\psi$ an orbit space of the action $\psi$ on $M$, and 
let $p:M\to M/\psi$ be a canonical  projection. Then
\begin{enumerate}
	\item an orbit space $M/\psi$ is diffeomorphic to $B$,   $p:M\to M/\psi$ is isomorphic as a principle $S^1$-bundle to a trivial $S^1$-bundle $f_0:S^1 \times B\to B$, where $f_0(z,b) = b$ for all $(z,b)\in S^1\times B,$ i.e., there are diffeomorphisms $h:S^1\times B\to M$ and $\ell:B\to M/\psi.$ such that the diagram commutes:
	$$
	\xymatrix{
	S^1\times B \ar[d]_{f_0} \ar[r]^h_{\cong} & M \ar[d]^p\\
	B \ar[r]^{\ell}_{\cong} & 	M/\psi
}
	$$

	\item an action $\psi$ on $M$ is conjugated to $\sst$ on $S^1\times B$ by a diffeomorphism $h$ as in {\rm (1)}, i.e., $\psi_g = h\circ \sst_g\circ h^{-1}$. Consequently, any  two smooth free  $S^1$-actions on $M$ are conjugated.
\end{enumerate}	
\end{lemma}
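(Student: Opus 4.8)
The plan is to recognize $p:M\to M/\psi$ as a principal $S^1$-bundle and to exploit that principal $S^1$-bundles over a compact $1$-manifold are trivial; the equivariant trivialization then \emph{is} the conjugating diffeomorphism.

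First I would establish (1). Since $S^1$ is compact, the free action $\psi$ is automatically proper, so by the quotient manifold theorem for free proper actions of compact Lie groups the orbit space $M/\psi$ is a smooth manifold (with boundary inherited from $\partial M$), and $p:M\to M/\psi$ is a smooth principal $S^1$-bundle. Counting dimensions, $\dim(M/\psi)=\dim M-\dim S^1=1$, and $M/\psi$ is compact and connected, being a continuous image of the compact connected $M$. The classification of compact connected $1$-manifolds then forces $M/\psi$ to be diffeomorphic to $[0,1]$ or to $S^1$; fix a diffeomorphism $\ell:B\to M/\psi$. Orientability of $M$ is consistent here: the total space of a principal $S^1$-bundle over an orientable base is orientable because the connected structure group $S^1$ acts on the fibers by orientation-preserving rotations, so the hypothesis merely excludes the Klein bottle and M\"obius band, which carry no free $S^1$-action anyway.

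Next I would prove that this principal bundle is trivial, which is the crux of the argument. If $B=[0,1]$ the base is contractible, so the bundle admits a global section and is trivial. If $B=S^1$, I would argue by clutching: writing $S^1=[0,1]/(0\sim 1)$, the restriction of $p$ to $[0,1]$ is trivial, and the bundle over $S^1$ is recovered by gluing the two ends via a gauge transformation, i.e.\ an element $\phi$ of the structure group $S^1$; since $S^1$ is connected, $\phi$ is isotopic to the identity, whence the bundle is isomorphic to the trivial one (equivalently, principal $S^1$-bundles over $S^1$ are classified by $H^2(S^1;\bbZ)=0$). In either case I obtain an $S^1$-equivariant diffeomorphism $h:S^1\times B\to M$ covering $\ell$, where $S^1$ acts on $S^1\times B$ by the standard action $\sst$; this is exactly an isomorphism of principal bundles making the square in (1) commute, i.e.\ $p\circ h=\ell\circ f_0$.

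Finally, (2) is a reinterpretation of the equivariance of $h$. By construction $h(\sst_g(z,b))=\psi_g(h(z,b))$ for all $g\in S^1$, i.e.\ $h\circ \sst_g=\psi_g\circ h$, which is $\psi_g=h\circ \sst_g\circ h^{-1}$. If $\psi,\psi'$ are two free smooth $S^1$-actions on $M$ with conjugating diffeomorphisms $h,h'$ as above, then substituting $\sst_g=h^{-1}\psi_g h$ gives $\psi'_g=(h'h^{-1})\circ\psi_g\circ(h'h^{-1})^{-1}$, so $\psi$ and $\psi'$ are conjugated by $k=h'\circ h^{-1}$. The main obstacle is the triviality of the bundle over $S^1$; everything else is the standard principal-bundle package, which is precisely what the cited results of Morita supply.
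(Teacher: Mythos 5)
Your argument is correct, and it is essentially the same one the paper relies on: the paper gives no proof of this lemma at all, delegating it to the cited results of Morita, which are exactly the ``standard principal-bundle package'' you invoke (quotient manifold theorem for the free proper $S^1$-action, one-dimensionality and classification of the base, triviality of principal $S^1$-bundles over $[0,1]$ and over $S^1$ via $H^2(S^1;\bbZ)=0$, and equivariance of the trivialization giving the conjugation). The only points worth polishing are to note that the quotient/slice theorem applies in the boundary case because $\partial M$ is invariant, and that your aside about the Klein bottle and M\"obius band is justified precisely by the orientability of the total space of a principal $S^1$-bundle over an orientable base, so it is a consequence rather than an independent input.
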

Other results on circle bundles on manifolds the reader can find in \textsection 6.2 \cite{Morita:TMM:2001}.

\section{Semi-free $S^1$-actions on surfaces}\label{subsec:normalized-vf}
Let $\psi:M\times G \to M$ be a smooth action of a Lie group $G$ on a smooth manifold $M$. 
 It is known that $\mathrm{Fix}(\psi)$ is a closed subset of $M.$
An action $\psi:M\times G \to M$ is called {\it semi-free}, if it is free on the complement of its fixed points $\mathrm{Fix}(\psi)$.

If $M$ is a smooth oriented and compact surface and $\psi$ is a semi-free $S^1$-action on $M$ one can establish, by using for example  slice theorem \cite[p. 139]{Koszul:GD:1953} that $\mathrm{Fix}(\psi)$ is a  finite subset of $M$  and  $\chi(M) = |\mathrm{Fix}(\psi)|\geq 0$, where $\chi(M)$ is an Eulerian characteristic of $M$.
The last formula is also a simple consequence of Atiyah–Bott–Berline–Vergne integration formula, see \cite[Theorem 30.2]{Tu:EquivariantCohomology:2020}.
Therefore $M$ is diffeomorphic to  one of the following list $S^1\times [0,1]$,  $T^2$,  $D^2$, $S^2.$

It is also known that any smooth $S^1$-action on a surface $M$ is uniquely determined by some vector field on $M$, since $S^1$ is connected Lie group.
In this paragraph we will show how such $S^1$-actions naturally ``occur'' from  smooth functions on surfaces.

Let $f$ be a function from $\mathcal{F}^0(M,P)$, $F$ be an $H$-field of $f$ with the flow $\bfF_t:M\to M$, $t\in \bbR$, see Proposition \ref{prop-main}.
A function $f$ does not have saddle critical points, so each trajectory of $F$ is periodic.
There exist a function $\per_F:M\setminus\CritP\to \bbR$ associating to each non-singular point $x\in M\setminus\CritP$ its prime period $\per_F(x)$  with respect to the flow $\bfF$, see \cite[\textsection 3]{Maks:reparam-sh-map}.

Let also $\nu:M\to \bbR$ be a positive smooth function on $M$ which is constant along trajectories of $F$, i.e., $F\nu = 0.$
It is known that  a vector field $\nu F$ defines the same foliation on $M$ as $F$.
We can choose a function $\nu$ such that $\per_{\nu F}(x)=1$ for all $x\in M$, see \cite[Lemma 2.1, Lemma 3.1]{Maks:reparam-sh-map}. 
Near each isolated singularity  $z$ a vector field $\nu F$ has the form $(\nu F)_z = \mp y\frac{\partial}{\partial x} \pm x \frac{\partial}{\partial y}$. It is known that \cite[Theorem 1.4]{Maksymenko:Nel:2009}  a function $\per_F$ can be smoothly extended to a whole $M.$ 
Such reparamentized vector field will be denoted by the same latter $F$ and called a {\bf normalized $H$-field} for $f$.

The flow $\bfF$ of a normalized $H$-field $F$ satisfies  $\bfF(x,t) = x$ for each $x\in M$ i.e., $\bfF_1 = \id_{M}$.
 Therefore $\bfF$ induces  a smooth $S^1$-action   $\psi:M\times S^1\to M$ on $M$ given by
\begin{equation}\label{eq:ciecle-act}
	\psi(x, e^{2\pi i s})  = \bfF_{s}(x),
\end{equation}
see \cite[Corollary 3.3]{Maks:reparam-sh-map}.
So $S^1$ acts on $M$ by smooth ``shifts'' along orbits of the flow $\bfF.$ Since isolated singularities of $f$ corresponds to a singularities of $F,$ it follows that
the action \eqref{eq:ciecle-act} is free on $M\setminus\CritP$, which will be called  a {\bf semi-free $S^1$-action on $M$ induced by $F$ (or by $f$)}.

\begin{example}\label{ex:circle-act-primitive}
{\rm 
Note that standard $S^1$-actions $\sst$ on a cylinder $S^1\times [0,1]$ and a torus $T^2$ appear  as \eqref{eq:ciecle-act}. Denote by $B = [0,1]$ or $S^1$ so $M = S^1\times B$. Let $f_0:M\to B$ be a prime function. A function $f_0$ is a projection onto $B$ and does not have critical points. Let also $F_0$ be a normalized $H$-field of $f_0$. The flow $\bfF^0:S^1\times B\times \bbR\to S^1\times B$ is given by $\bfF^0_s(z, b) =(ze^{2\pi i s}, b)$. It is easy to see that the last formula is the definition of  $\sst(z,b, e^{2\pi i s})$.
}
\end{example}
It is known that  an orientation-preserving diffeomorphism $h$ of a cylinder $S^1\times [0,1]$ such that $h(S^1\times \{i\}) = S^1\times \{i\}$, $i = 0,1$ is isotopic to a such diffeomorphism $\tilde{h}:S^1\times [0,1]\to S^1\times [0,1]$ which is fixed on some neighborhood of the boundary $S^1\times [0,1]$. The following lemma  is a ``variation'' of this fact using smooth $S^1$-actions on a cylinder.
It is an important tool for the proof of  Theorem \ref{thm:thm-main-2}.

\begin{lemma}\label{lm:h-conj-id}
	Let $M$ be diffeomorphic to a cylinder $S^1\times [0,1]$, $\psi, \phi:M\times S^1\to M$ be two smooth $S^1$-actions on $M$  such that $\phi$ is given by \eqref{eq:ciecle-act} for some flow $\bfF_t:M\to M$. Let also $h:M\to M$ be a diffeomorphism which conjugates $\psi$ and $\phi,$ i.e., $\psi_g = h\circ \phi_g\circ h^{-1}$ {\rm (}such $h$ always exists due to {\rm Lemma \ref{lm:free-S1-action})}.
	Let also $U$ and $V$ be open subspaces {\rm(}not necessary connected{\rm)} of $M$ consisting of trajectories of $\bfF$ and such that $\overline{V}\subset U$. If $\psi = \phi$ on $U$ then $h$ can be isotoped to a diffeomorphism $\tilde{h}:M\to M$ such that $\psi_g = \tilde{h}\circ \phi_g\circ \tilde{h}^{-1}$ and 
	\begin{equation}\label{eq:new-diffeo-conj}
		\tilde{h} = \begin{cases}
			\id, & \text{on } \overline{V},\\
			h, & \text{on } M\setminus U.
		\end{cases}
	\end{equation}
\end{lemma}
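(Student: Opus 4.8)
The plan is to reduce the problem to the construction of a single correction diffeomorphism that commutes with $\phi$, and then to build that correction by interpolation across the collar $U\setminus\overline{V}$. The starting observation is the following algebraic remark: if we seek $\tilde h$ in the form $\tilde h = h\circ c$, then
$\tilde h\circ\phi_g\circ\tilde h^{-1} = h\circ(c\circ\phi_g\circ c^{-1})\circ h^{-1}$, so $\tilde h$ conjugates $\phi$ to $\psi$ \emph{precisely} when $c\circ\phi_g\circ c^{-1}=\phi_g$ for all $g$, i.e. when $c$ commutes with $\phi$. Under this substitution the three requirements on $\tilde h$ translate into requirements on $c$: the condition $\tilde h=h$ on $M\setminus U$ becomes $c=\id$ on $M\setminus U$; the condition $\tilde h=\id$ on $\overline{V}$ becomes $c=h^{-1}$ on $\overline{V}$; and an isotopy of $h$ to $\tilde h$ through conjugating diffeomorphisms corresponds to a path $c_t$ of $\phi$-commuting diffeomorphisms from $c_0=\id$ to $c_1=c$, supported in $U$. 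So it suffices to produce such a $c$ together with such a path.

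I would next make this concrete using the two folklore lemmas. By Lemma~\ref{lm:free-S1-action} we may put $\phi$ in the standard form $\sst$ on $S^1\times[0,1]$, so that the orbits of $\phi$ are the circles $S^1\times\{b\}$ and $U=S^1\times A$, $V=S^1\times A'$ for open sets $\overline{A'}\subset A\subset[0,1]$. A diffeomorphism commuting with $\sst$ maps each orbit circle to an orbit circle, and by Lemma~\ref{lm:diff-commutes-angles} its restriction to each such circle is a rotation; hence every $\phi$-commuting $c$ has the form $c(z,b)=(z\,\rho(b),\tau(b))$ for a smooth gauge $\rho\colon[0,1]\to S^1$ and a base diffeomorphism $\tau$ of $[0,1]$. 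I would then record that $h^{-1}$ commutes with $\phi$ on $U$: indeed for $x\in U$ one has $\phi_g(x)=\psi_g(x)$ (since $\psi=\phi$ on $U$ and $U$ is $\phi$-invariant), and combining this with $h^{-1}\circ\psi_g=\phi_g\circ h^{-1}$ gives $h^{-1}\circ\phi_g=\phi_g\circ h^{-1}$ on $U$. Consequently $h^{-1}|_U$ is itself of the form $(z,b)\mapsto(z\,\rho_0(b),\tau_0(b))$, which is exactly the datum that $c$ must match on $\overline{V}$. Thus the whole task is to extend the pair $(\rho_0,\tau_0)$ from $\overline{A'}$ to a pair $(\rho,\tau)$ defined on all of $[0,1]$, equal to $(1,\id)$ off $A$, and to connect the resulting $c$ to $\id$ by a path of the same type supported in $A$.

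The extension splits into a \emph{gauge part} $\rho$ and a \emph{base part} $\tau$, and these behave very differently. The gauge part is soft: since $S^1$ is connected, on each component of $A$ the prescribed values of $\rho$ on $\overline{A'}$ and the value $1$ near the endpoints can be joined by a smooth $S^1$-valued function, and the whole family can be linearly scaled in the exponent to give a canonical contraction of $\rho$ to the constant $1$; this produces both the rotation part of $c$ and its isotopy to the identity with no obstruction. I expect the \textbf{base part $\tau$ to be the main obstacle}: we must extend $\tau_0$ to a diffeomorphism of $[0,1]$ that is the identity off $A$ and equals $\tau_0$ on $\overline{A'}$, and monotonicity forces, on each component $(a,b)$ of $A$, that $\tau_0(\overline{A'}\cap(a,b))$ lie inside $(a,b)$. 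This is a genuine constraint on how $h$ distorts the orbit space, and it is where the hypothesis $\psi=\phi$ on $U$ must be used to full strength, not merely to get $\phi$-commutativity.

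To overcome this I would insert a preliminary normalization step before the interpolation: using that $\psi=\phi$ on $U$ makes the orbit projections of $\phi$ and $\psi$ coincide over $U$, I would first isotope $h$, through conjugating diffeomorphisms supported in $U$, to a map inducing the identity on the orbit space over $U$, i.e. one that carries each $\phi$-orbit of $U$ to itself. Once $\tau_0=\id$ on $\overline{A'}$ (and $h$ preserves each component of $U$ setwise, which on the cylinder is controlled by the boundary-component behaviour of $h$), the base extension becomes trivial — one simply takes $\tau=\id$ — and only the gauge interpolation of the previous paragraph remains. Assembling the contraction of the gauge part with this normalized base, and transporting everything back through the standard-form identification of Lemma~\ref{lm:free-S1-action}, yields the desired $c$ and the isotopy, and hence $\tilde h=h\circ c$ with $\tilde h=\id$ on $\overline{V}$ and $\tilde h=h$ on $M\setminus U$, conjugating $\phi$ to $\psi$ throughout. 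The delicate point to write carefully is precisely the claim that $h$ can be normalized to be orbit-preserving over $U$; everything after that is routine collar interpolation.
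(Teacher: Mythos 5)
Your reduction is essentially the one the paper uses: the paper also corrects $h$ by a $\phi$-equivariant shift supported in $U$, obtained by writing $h$ fiberwise on $U$ as a rotation $\phi_{g_0}=\bfF_{t_0}$ via Lemma~\ref{lm:diff-commutes-angles}, extending the resulting function $t_0$ to $M$, damping it by a bump function $\delta$ with $F\delta=0$, $\delta=1$ on $\overline{V}$, $\delta=0$ off $U$, and setting $\tilde h=\bfF_{\delta t_0}^{-1}\circ h$ with the explicit isotopy $H_s=\bfF_{s\delta t_0}^{-1}\circ h$. Your ``gauge part'' $\rho$ and its linear contraction in the exponent are exactly this construction, so up to that point the two arguments coincide.

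The genuine problem is that your proposal stops at the step that carries all the content: the normalization making $h$ preserve each $\phi$-orbit of $U$, i.e.\ making the base part $\tau_0$ the identity over $A$. You explicitly defer this (``the delicate point to write carefully''), so as written the proof is incomplete precisely at its crux, and it is not a routine collar argument: any $\tilde h$ commuting with $\sst$ induces a monotone diffeomorphism $\tau$ of $[0,1]$, and the two requirements $\tau=\id$ on $\overline{A'}$ and $\tau=\tau_0$ on $[0,1]\setminus A$ are incompatible for a general $\tau_0$ (take $\tau_0$ moving an endpoint of a component of $A$ past a point of $\overline{A'}$). So the orbit-preservation cannot be manufactured from the stated hypotheses alone; it has to be supplied either as an extra hypothesis or from the way $h$ arises in the applications. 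You should be aware that the paper's own proof silently assumes exactly this point: it applies Lemma~\ref{lm:diff-commutes-angles}(2) to $h|_\gamma$ for a trajectory $\gamma\subset U$, which presupposes $h(\gamma)=\gamma$, whereas the commutation relation only yields that $h$ maps $\phi$-orbits to $\phi$-orbits. You have therefore correctly isolated the weak link of the argument, but you have not closed it, and closing it is where the actual work lies.
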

\begin{proof}
Since $\psi$ coincides with $\phi$ on $U$, it follows that $\phi_g =h\circ \phi_g\circ h^{-1}$ on $U$, and so $\phi_g\circ h = h\circ \phi_g$ on $U$ for each $g\in S^1.$ Let $\gamma\subset U$ be a trajectory of $\phi$. Then by Lemma \ref{lm:diff-commutes-angles} there exists $g_0\in S^1$ such that $h = \phi_{g_0}$ on $\gamma.$ Then for $t_0\in \bbR$ such that $g_0 = e^{2\pi i t_0}$ a diffeomorphism $h$ on $\gamma$ has the form $h = \bfF_{t_0}$. A number $t_0$ is smoothly depending on $\gamma\subset U$, so on $U$ a diffeomorphism $h$ became
$h = \bfF_{t_0}$, where $t_0:U\to \bbR$ is a smooth function on $U$ which satisfies $t_0|_{\gamma}\in \bbR$ for each $\gamma\in\Delta_{\bfF}$, i.e., $Ft_0 = 0$. Such diffeomorphism $h$ is called a ``shift'' along trajectories of $\bfF$.
We extend a function $t_0$ to  a smooth function on a whole $M$ denoted by the same symbol $t_0:M\to \bbR$.

Consider a smooth bump function  $\delta:M\to \bbR$ which satisfies the following conditions:
\begin{enumerate}
	\item $\delta$ is constant on trajectories of $\bfF$, i.e., $F\delta = 0$,
	\item $\delta = 1$ on $\overline{V}$ and $\delta = 0$ on $M\setminus U$.
\end{enumerate}
 A map  $\bfF_{\delta t_0}:M\to M$ is a diffeomorphism due to  \cite[Theorem 19]{Maksymenko:TA:2003}.
Note that  $\bfF_{\delta t_0}$ coincides with $h$ on $\overline{V}$ and is identity on $M\setminus U$.  

Define a diffeomorphism $\tilde{h}:M\to M$ given by $\tilde{h} = \bfF_{\delta t_0}^{-1}\circ h$. 
A diffeomorphism $\tilde{h}$ is isotopic to $h$ since the isotopy $H_s:M\to M$, $s\in [0,1]$ between $h$ and $\tilde{h}$ can be defined by the formula 
\begin{equation}\label{eq:H-isotopy}
H_s = \bfF_{s\delta t_0}^{-1}\circ h. 
\end{equation}
Indeed, $H_0 = h$ and $H_1 = \tilde{h}$, and each $H_s$ are diffeomorphisms.
It is easy to see that $\tilde{h}$ satisfies \eqref{eq:new-diffeo-conj}, so it conjugates $\psi$ and $\phi$ on $\overline{V}$ and $M\setminus U$. 

To prove that $\tilde{h}$ conjugates these actions on $\overline{U\setminus V}$ we claim that $\bfF_{\delta t_0}$ commutes with $\psi_g$. Indeed, a function $\delta t_0$ is constant along each orbit $\gamma$ of $\bfF$ on $U$, so for each $x\in \gamma$ we have $\delta(x)t_0(x) = c\in \bbR$. Therefore $\bfF_{\delta _0}|_{\gamma}(x) = \phi_c(x)$ for $x\in \gamma,$ and since $\phi$ coincides with $\psi$ on $U$ we obtain $\bfF_{\delta t_0}|_{\gamma}(x) = \psi_c(x)$. An action $\psi$ obviously commutes with a rotation $\psi_c$.

The fact that $\tilde{h}$ conjugates $\phi$ and $\psi$ on $\overline{U\setminus V}$ can be checked as follows:
\begin{align*}
	\tilde{h}\circ \phi_g\circ	\tilde{h}^{-1} &=  \bfF_{\delta t_0}^{-1}\circ h\circ \phi_g\circ h^{-1}\circ \bfF_{\delta t_0}\\
	&= \bfF_{\delta t_0}^{-1}\circ \psi_g\circ \bfF_{\delta t_0} \\
	&=\bfF_{\delta t_0}^{-1}\circ\bfF_{\delta t_0}\circ \psi_g & \text{($\bfF_{\delta t_0}$ commutes with $\phi_g$ on $U$)}\\
	&=\psi_g.
\end{align*}
The proof is completed.
\end{proof}

\section{Combinatorial structure of functions from $\FF$}\label{sec:combinatirics}
Let $f$ be a function from $\FF.$ By Definition \ref{def:class-FF} a function $f$ has finitely many critical points and critical circles.

\subsection{Proof of Proposition \ref{prop:properties-f}}\label{sec:Prof-lm-prop}
 If $f$ has no critical circles then $f$ is a Morse function without saddles. In this case Proposition \ref{prop:properties-f} follows from Morse equalities:
 \begin{equation}\label{eq:Morse-eq-crit}
 	\chi(M) = |\Sigma^p_g|\geq 0.
 \end{equation}

 We assume that $\CritC =\{C_1, C_2,\ldots, C_n\}$ for some $n\geq 1$. 
  Proposition \ref{prop:properties-f} is the consequence of the following 
 \begin{lemma}\label{lm:decomp-cyl}
 	Let $f$ be a function from $\FF$ with  $\CritC = \{C_1,C_2,\ldots, C_n\}$. Denote by $\mathcal{N} = \{N_1, N_2, \ldots, N_s\}$ a set of oriented and connected surfaces with boundary such that
 		\begin{equation}\label{lm:decomposition}
 		\overline{M\setminus\bigcup_{i = 1}^n C_i}= \bigcup_{j = 1}^s N_j.
 		\end{equation}
 	 Then $N\in\mathcal{N}$ is diffeomorphic to either $S^1\times [0,1]$ or to $D^2.$
 \end{lemma}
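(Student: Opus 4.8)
The plan is to analyze the connected components $N_j$ of the surface obtained by cutting $M$ along the critical circles $C_1, \ldots, C_n$. The key idea is that each such $N_j$ is a compact oriented surface whose boundary consists entirely of critical circles (and possibly components of $\partial M$), and on which the restriction $f|_{N_j}$ inherits enough structure to force $N_j$ to be topologically simple. My strategy rests on understanding what $f$ looks like on each $N_j$: since we have removed all the critical circles, the interior of each $N_j$ contains at most the isolated critical points of $f$, and $f|_{N_j}$ is essentially a function whose only singularities are Morse extrema in the interior, with the boundary circles being either extremal critical circles (where $f$ attains a local extremum along the circle) or non-extremal critical circles.

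\textbf{First I would establish the topological type of $N_j$ via Euler characteristic and boundary analysis.} The crucial observation from the class $\FF$ is that near each critical circle $C_i$, the local model $f_p(x,y) = \pm y^{n_{C_i}}$ means that $C_i$ is a regular circle fiber of the foliation $\Delta_f$, and the behavior of $f$ transverse to $C_i$ is governed by the parity of $n_{C_i}$. When we cut along $C_i$, each side of the cut becomes a boundary circle of some $N_j$. I would then argue that $f|_{N_j}$, after possibly composing with a local reparametrization of the target, behaves like a function on $N_j$ whose boundary components are level sets and whose only interior critical points are nondegenerate extrema. The count of such extrema is controlled by $\chi(N_j)$ through the Morse equality \eqref{eq:Morse-eq-crit} applied to $N_j$ (with boundary).

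\textbf{The main technical step} is to show that $N_j$ is diffeomorphic to a cylinder or a disk, and not to any surface of higher genus or with more boundary components. The cleanest route is to observe that every leaf of $\Delta_f$ in the interior of $N_j$ that is not an isolated critical point is a circle (a regular level component), so $f|_{N_j}$ foliates $N_j$ by circles away from its isolated critical points. A compact oriented surface foliated by circles with only center-type (extremal, nondegenerate) singularities and whose boundary is a union of leaves must be a disk (if there is one interior critical point) or a cylinder (if there is none). Concretely, I would invoke the structure of the Reeb graph of $f|_{N_j}$: because $f$ has no saddles, this graph is an interval or a segment with at most one endpoint vertex coming from an isolated extremum, which forces $N_j$ to be a cylinder $S^1 \times [0,1]$ when both ends are boundary circles, and a disk $D^2$ when one end caps off at an interior Morse extremum.

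\textbf{I expect the main obstacle} to be the careful bookkeeping at the cut circles: after cutting along a non-extremal circle $C_i$, the function $f$ does not change sign, so both resulting boundary circles lie at the same critical value and the circles are genuinely regular-type boundaries, whereas cutting along an extremal circle produces a boundary where $f$ attains a local extremum. Ensuring that in every case the resulting $N_j$ has a well-defined ``top'' and ``bottom'' in the Reeb-graph sense — so that the no-saddle condition truly forces the cylinder/disk dichotomy — requires treating the extremal and non-extremal cases separately and confirming that no component $N_j$ can have three or more boundary circles or positive genus. I would rule these out by noting that a pair of pants or a higher-genus piece would force a saddle of $f|_{N_j}$ in its interior (by the Morse equality $\chi(N_j) = |\Sigma^p_{f} \cap \Int N_j|$ combined with the fact that extrema contribute $+1$ and there are no saddles), contradicting $f \in \FF$. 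Thus $\chi(N_j) \in \{0, 1\}$, giving exactly the cylinder ($\chi = 0$) and disk ($\chi = 1$) alternatives.
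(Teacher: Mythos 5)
Your proposal is correct and follows essentially the same route as the paper: apply the Morse equality $\chi = |\Sigma^p|\geq 0$ for a saddle-free function to each piece $N_j$, and conclude from $\chi(N_j)\geq 0$ together with $\partial N_j\neq\varnothing$ that $N_j$ is a disk or a cylinder. The only refinement in the paper's version is that, since the boundary circles of $N_j$ are themselves critical circles of $f$, it first excises $f$-foliated collar neighborhoods of the $C_i$ to obtain a subsurface $Q\subset N_j$ on which $f$ is an honest Morse function, applies the equality to $Q$, and then uses $\chi(N_j)=\chi(Q)$ — exactly the bookkeeping you flag as the expected obstacle and address via reparametrization near the cut circles.
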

 \begin{proof}
 	Let $N$ be a surface from $\mathcal{N}$,
 	and $W_i$ be an open connected $f$-foliated neighborhood of $C_i\in \CritC$ such that $W_i\setminus C_i$ contains no critical points of $f.$ We put $W = \bigcup_{i = 1}^n W_i$ and $Q = \overline{N\setminus W}.$ Note that $Q$  does not contain critical circles of $f$. Then a restriction $f|_Q:Q\to P$ is Morse function without saddles. 
 	 From Morse equalities \eqref{eq:Morse-eq-crit} we obtain $\chi(Q) = |\Sigma_{f|_Q}|\geq 0 $.
 	Since $Q$ has the boundary, it follows that $Q$  is diffeomorphic to either a disk $D^2$ or a cylinder $S^1\times [0,1]$. By construction, $W_i$ is a cylinder containing only one critical circle of $f$ belonging to $N$. Therefore $\chi(N) = \chi(Q)\geq 0$ and so $N$ is diffeomorphic to either a cylinder $S^1\times [0,1]$ or to a disk $D^2$, since $N$ has the boundary.
 \end{proof}	
	(1) By Lemma \ref{lm:decomp-cyl}, a surface $M$ is presented as some cylinders  and disks bounded by critical circles of $f$ and attached together along critical circles of $f$.
 	Since $\chi(C_i) = 0$ for each $i = 1,2,\ldots, n$, it follows that  
 	\begin{equation}\label{eq:euler-char-sum}
 	\chi(M) = \sum_{j} \chi(N_j) \geq 0
	\end{equation}
 	and so $M$ is diffeomorphic to one of the following list: $S^1\times [0,1]$, $T^2$, $D^2$, $S^2$.
 	
 	Statement (2) of Proposition  \ref{prop:properties-f} follows from \eqref{eq:euler-char-sum} and Morse equalities \eqref{eq:Morse-eq-crit} applying to $f|_{Q}$ form $Q\subset N\in \mathcal{N}$, where $Q$ is as in Lemma \ref{lm:decomp-cyl}. The proof of Proposition  \ref{prop:properties-f} is completed.
 	\qed
 	
 	
 	The following corollary is a simple consequence of Proposition \ref{prop:properties-f} and the behavior of the gradient vector field of $f$ near critical circles, see Section \ref{sec:hom-prop}.

\begin{corollary}\label{cor:cor3}
	Let $L$ be a closed connected subcylinder of $M$ bounded by extremal circles $C$ and $C'$, and such that $\mathrm{Int}L$ does not contain any other extremal circles of $f$. The following holds true: if $C$ is maximal (minimal), then $C'$ is minimal (maximal).
\end{corollary}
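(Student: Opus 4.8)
The plan is to track how $f$ changes in the direction transverse to the critical circles as one moves across $L$ from $C$ to $C'$, and to show that this transverse monotonicity is \emph{reversed} precisely at extremal circles and \emph{preserved} at non-extremal ones. Assume $C$ is maximal (the case when $C$ is minimal is obtained by reversing all the monotonicity directions below).

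First I would record the local picture of $L$. Since $f$ has no saddle points, the components of the level sets of $f$ meeting $\Int L$ are circles isotopic to the core of the cylinder $L\cong\CYL$: a level component could fail to be essential only across a saddle, and an isolated critical point in $\Int L$ would be a Morse extremum whose small surrounding level circles are inessential in $L$, again incompatible with the essential boundary circles $C,C'$ in the absence of saddles. Hence $\Int L$ contains no isolated critical points, and the critical circles $C_1,\dots,C_k$ of $f$ lying in $\Int L$ (all non-extremal, by hypothesis) are mutually parallel. Ordering them along $L$, they cut $L$ into closed subcylinders $L_0,\dots,L_k$ with $C\subset\partial L_0$, $C'\subset\partial L_k$ and $C_i=L_{i-1}\cap L_i$.

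Next I would show that $f$ is strictly monotone transverse to the circles on each piece. On $\Int L_i$ there are no critical circles and no isolated critical points, so after deleting small $f$-foliated collars of the boundary circles as in the proof of Lemma~\ref{lm:decomp-cyl} one obtains a cylinder $Q_i$ on which $f$ restricts to a Morse function without saddles; the Morse equality $\chi(Q_i)=|\Sigma_{f|_{Q_i}}|$ together with $\chi(Q_i)=0$ shows that $f|_{Q_i}$ has no critical points. Thus $f|_{\Int L_i}$ is a submersion whose level sets are the parallel circles, and $f$ is strictly monotone along any arc crossing $L_i$ transversally.

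Finally I would propagate the monotone direction across $L$. Near the maximal circle $C$ the function $f$ has a local maximum, so it strictly decreases as one enters $L_0$, hence strictly decreases across all of $\Int L_0$ down to the value $c_1=f(C_1)$. The crux is the crossing of a non-extremal circle $C_i$: by $(3.b)$ of Definition~\ref{def:class-FF} one has $f=c_i\pm y^{n_i}$ in a chart with transverse coordinate $y$ and $n_i$ odd, so $f-c_i$ changes sign across $C_i$; since $f$ reaches $C_i$ strictly decreasing from the $L_{i-1}$ side, continuity forces $f<c_i$ on the $L_i$ side, i.e.\ $f$ keeps strictly decreasing into $L_i$. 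Iterating for $i=1,\dots,k$, the function $f$ decreases monotonically from $C$ to $C'$; therefore it attains a local minimum at $C'$, and being extremal, $C'$ is minimal. The main obstacle is precisely this sign-change step: one must ensure that the transverse arc realizing the monotonicity on $\Int L_{i-1}$ and on $\Int L_i$ crosses $C_i$ compatibly with the local chart, which holds because $C_i$ is a single leaf of $\Delta_f$ and the oddness of $n_i$ (non-extremality) makes $f-c_i$ change sign there --- for an even $n_i$ the sign would be preserved and the transverse monotonicity would reverse, which is exactly the mechanism by which extremal circles flip the maximal/minimal type.
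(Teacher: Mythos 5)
Your argument is correct and is essentially the paper's own (the paper offers only a one-line justification, citing Proposition \ref{prop:properties-f} and the behaviour of $\mathrm{grad}f$ near critical circles from Section \ref{sec:hom-prop}, which is precisely the mechanism you spell out): transverse monotonicity of $f$ across $L$ is preserved at non-extremal circles, where the odd exponent $n_{C_i}$ forces $f-c_i$ to change sign, and can only flip at extremal ones. The details you add — exclusion of isolated critical points and saddles from $\Int L$ and strict monotonicity on each piece via the Morse equality, as in Lemma \ref{lm:decomp-cyl} — correctly fill in what the paper leaves implicit.
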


\subsection{Recapture of a surface structure}\label{subsec:orering} 
Let $f$ be a function from $\FF$ and $\CritC = \{C_1,C_2,\ldots, C_n\}$ be its set of critical circles. Let also $\mathcal{N}$ be the set of subsurfaces of $M$ such in Lemma \ref{lm:decomp-cyl}.
So a surface $M$ is a union of surfaces $N_j$ being cylinders or disks, whose boundaries are critical circles of $f$. It is possible to endow some order relation on the set of critical circles $\CritC$ of $f$. We consider two cases.

{\it Case 1. $M$ is not diffeomorphic to $T^2$}.
Each $C\in \CritC$ separates $M$ and all of curves from $\CritC$ are mutually disjoint and pairwise isotopic to each other.   	In particular we have $|\mathcal{N}| = |\CritC|+1$.

We can rearrange curves from $\CritC$ with respect to the following order relation:
$C_i\preceq C_j$ iff $M_i\subset M_j$, where $M_i$ is {\it a slice of $M$ under $C_i$}, i.e., a connected component of $\overline{M\setminus C_i}$ containing
\begin{itemize}
	\item a fixed connected component of $\partial M$ if $M\cong S^1\times [0,1]$,
	\item an isolated critical point $z$ of $f$, if $M \cong D^2$,
	\item a fixed isolated critical point $z$ of $f$ where $f$, if $M \cong S^2$.
\end{itemize}

This order relation induces some relation on the set $\mathcal{N}$ from Lemma \ref{lm:decomp-cyl}. So we reenumerate the set $\mathcal{N}$ of subsurfaces $N_j$ from \eqref{lm:decomposition}   such that $N_1 = M_{1}$ and $N_i$ be such element of $\mathcal{N}$ such that $N_{j+1}  = \overline{ M_{{j+1}}\setminus M_{j}}$. 

If $M$ is diffeomorphic to $D^2$, then $N_{n+1}$ is a cylinder which contains $\partial D^2$, and if $M$ is diffeomorphic to $S^2,$ a surface $N_{n+1}$ is diffeomorphic to a disk which contains the other extreme, which is not fixed above.

{\it Case 2. $M$ is diffeomorphic to $T^2$.}
If $M = T^2$, then each curve $C\in \CritC$ does not separate $T^2$, any two $C$ and $C'$ of them are mutually disjoint and isotopic to each other. In particular we have $|\mathcal{N}| = |\CritC|.$
So we can assume that a curves from $\CritC$ is cyclically enumerated $\CritC = \{C_1,C_2,\ldots, C_n, C_{n+1} = C_1\}$. 
 Denote by $N_i$  a cylinder bounded by critical circles $C_i$ and $C_{i+1}$.
We also set:
$$
C_i := C_{i\;\mathrm{mod} n +1},\qquad N_i := N_{i\;\mathrm{mod} n+1}.
$$
So $T^2$ is obtained by attaching $n$ cylinders $N_i$ to each other by a cycle.

\subsection{Proof of Lemma  \ref{lm:torus-crit-circles}}\label{subsec:proof-lema-torus}
Let $M$ is diffeomorphic to $T^2$, $f\in \FF$, and $\CritC$ be the set of critical circles of $f$, with $|\CritC| = n\geq 0.$
Denote by $E = \{A_1,A_2,\ldots, A_k\}\subset \CritC$  the set of extremal circles of $f$ numbered with respect to a cyclic order on $\CritC$ for $k\leq n$, and  by 
 $L_i$ a subcylinder in $M$ bounded by $A_i$ and $A_{i+1}$, where all indexes are taken modulo $k$. Assume the converse it true, i.e., $|E|$ is odd, i.e., $k = 2r-1$ for some $r\geq 1$. Then by Corollary \ref{cor:cor3}. If $A_1$ is maximal, then $A_{2i-1}$ is maximal and $A_{2i}$ is minimal for each $i = 1,2\ldots, k=2r-1$. But $A_1 =A_{2r}$, which leads to a contradiction.

If $f$ is null-homotopic, then we regard $f$ as real-valued function. Since $f$ has no isolated critical points and $T^2$ is compact, then $f$ attains its minimum and maximum values. Therefore $f$ has at least two critical circles.

\section{Proof of Proposition \ref{prop-main}}\label{sec:proof-prop-main}
Let $f$ be a function from $\FF$ with the set of critical circles $\CritC$. 
Let $X_f$ be a Hamiltonian vector field of $f$ with respect to some symplectic structure $\omega$ on $M$. 
 
\subsection{Case $M = S^1\times[0,1]$}\label{subsec:H-cylinder}
A function $f:M\to \bbR$ has no isolated critical points, see Proposition \ref{prop:properties-f}.
If $f$ has no critical circles, then $f$ is a Morse function and a Hamiltonian vector field $X_f$ obviously satisfies condition of Proposition \ref{prop-main}.

Assume that $\CritC = \{C_1,C_2\ldots, C_n\}$ ordered as in \textsection\ref{subsec:orering} for some $n\geq 1$.
Let also $N_i$ be a cylinder bounded by $C_i$ and $C_{i+1}$, and $M_i$ is a slice under $C_i$, see \textsection\ref{subsec:orering} .
A Hamiltonian field $X_f$ automatically satisfies (1)  of Lemma \ref{lm:H-like-f}, but each critical circle $C_i\in \CritC$ is a zero of $X_f$. 
To obtain a needed vector field $F$ we have to do 3 steps: we need to

 (1)   define a new vector field $Y$ on $M$ such that any two regular trajectory $\gamma_1$ and $\gamma_2$ of $Y$ on $M$ have the ``same orientation'' in the following sense: $\gamma_1$ and $\gamma_2$ has the same orientation if any trajectory of $Y$ on a cylinder $Q$ bounded by $\gamma_1$ and $\gamma_2$ can be canonically oriented;
 
 (2) consider a vector field $Z$ on $M$ such that $Z$ is tangent to each level-sets of $f$ on some neighborhood of a critical circle $C\in \CritC$, $Z$ is has no zeros in it, and $Z$ is zero outside some ``bigger'' neighborhood of $C$. 
 
(3) Finally using the partition of the unity attach these two vector fields to obtain a vector field $F$, which has no zeros in $M$, so it satisfies Proposition \ref{prop-main}.

{\it Step 1.}
Let $V_i$ and $W_i$ be open connected and $f$-foliated neighborhoods of a critical circle $C_i$ of $f$ such that $\overline{V}_i\subset W_i$,  and $W_i$ does not contain other critical points of $f$. We set $V = \bigcup_{i=1}^n V_i$ and $W = \bigcup_{i = 1}^n W_i$.
 Let also $\alpha:M\to \bbR$ be a smooth bump function such that $X_f \alpha=0$ and 
$$
 \alpha = \begin{cases}
	0, &\text{on } \overline{V},\\
	1, & \text{on } \overline{M\setminus W}.
\end{cases}
$$ 
Denote by $X$ a vector field on $M$ given by $X = \alpha X_f$. A vector field $X$ has a zero locus in $\overline{V}$.

Define a vector field $Y$ on $M$ by the following induction procedure:
set a vector field $Y_1$ on $N_1 = M_{1}$ by the formula $Y_0 = X|_{N_1}$, and define $Y_i$ as follows:
$$
Y_{i+1} = \begin{cases}
	Y_{i}, & \text{on } M_{i},\\
	a_{i+1} X|_{N_{i+1}}, & \text{on } N_{i+1},
\end{cases}
$$
where $a_{i+1}$ is a constant defined by
$$
a_{i+1} = \begin{cases}
	+1, & \text{if $C_{i+1}$ is non-extremal critical circle},\\
	-1, & \text{otherwise}.
\end{cases}
$$
On the last step, when $i = n-1$, we obtain a vector field $Y = Y_{n}$.

{\it Step 2.} Since $\overline{W}_i$ is a cylinder, it follows that there exits a vector field $Z_i$ on $\overline{W}_i$ such that $Z_i$ is tangent to level-sets of $f|_{\overline{W}_i}$ and $Z_i$ is non-zero on $\overline{W}_i.$ Indeed such vector field $Z_i$ can be defined as follows. Let $p\in C_i$ be a critical point. There exists a chart $(U,\phi:U\to \bbR^2)$ near $z$ with $\phi(z) = 0$ and such that $f_z = f\circ \phi^{-1}:\phi(U)\to \bbR$ has the form $f_z(x,y) = \pm y^{n_{C_i}}$ for some $n_{C_i}\geq 2.$
Let $F_p$ be a vector field on $U$ given by $F_p = \frac{\partial}{\partial x}$. Assume that $W$ is covered $\overline{W} = \bigcup_{j=0}^n U_j$ by charts $(U_j, \phi_j:U_j\to \bbR^2)$ and $U_0 = U_p$. 
Then using a partition of the unity subordinating to this cover one can extend $F_p$ to a vector field on $\overline{W}_i$ denoted by $Z_i$. 

A vector field $Z_i$ can be chosen to be  co-directional with $Y$ on $\overline{W_i\setminus V_i}$, i.e., $Z_i = \lambda_i Y$ on $\overline{W_i\setminus V_i}$ for some non-negative function $\lambda_i:\overline{W_i\setminus V_i}\to \bbR$. Indeed, let $\gamma$ be a trajectory of $Y$ on  $\overline{W_i\setminus V_i}$. Since $T_p\gamma$ is an $1$-dimensional vector space, it follows that $(Z_i)_p = \lambda_i(p)Y_p$ on $\overline{W_i\setminus V_i}$, for some non-zero  $\lambda(p)\in \bbR$. A number $\lambda_i(p)$ is smoothly dependent on $p\in \overline{W_i\setminus V_i}$, so $Z_i = \lambda_iY$ for some non-zero smooth function $\lambda_i:\overline{W_i\setminus V_i}\to \bbR$. So by changing $Z_i$ to $-Z_i$ if needed, on can assume that $Z_i$ is codirectional with $Y$ on $\overline{W_i\setminus V_i}$.

Since connected components of $M\setminus W$ are cylinders it is easy to see that there exists a vector field $Z'$ on $M$ which extends $Z_i$ on $W_i$ for each $i = 1,2,\ldots,n$. Denote by $Z$ a vector field $(1-\alpha)Z'$. Vector field $Z$ has zero locus outside $W.$

{\it Step 3.} Finally we set $F = Y + Z$ on $M$.  A vector field $F$ is tangent to level-sets of $f$, has no zeros in $M$ since $Y$ and $Z$ are co-directional on $W\setminus V$ and zero locuses of $Y$ and $Z$ do not intersect. So $F$ is such as required in Proposition \ref{prop-main}.
The proof for the case of $M = S^1\times [0,1]$ is completed.

\subsection{Case $M = T^2$} A function $f:T^2\to P$ has no isolated critical points, see Proposition \ref{prop:properties-f}.
We reduce the proof in this case to the case of cylinder.

 Let $A$ be a regular and connected component of some level-set and $U$ be its open connected and $f$-foliated neighborhood, which does not contain critical circles of $f$. 
 Denote by $Q$ a cylinder $\overline{T^2\setminus A}$. A curve $A$ does not separate $T^2$, but it separates $U$, i.e., $\overline{U\setminus A}$ has two connected components, say, $U_0$ and $U_1$ being cylinders.

A function
$f|_Q\in\mathcal{F}^{\circ}(Q, P)$, so by \textsection\ref{subsec:H-cylinder} that there exist a vector field $F_Q$ without zeros on $Q$. Since $U$ contains only regular trajectories of $X_f$, it follows the definition of $F$ that 
\begin{equation}\label{eq:F-torus}
F_Q|_{U_i} = b_i X_f|_{U_i}
\end{equation}
for $i = 0,1$, where $b_i = \pm 1.$ By (3) of Proposition \ref{prop:properties-f} a function $f$ has even number of critical circles yielding $b_0 = b_1$ in \eqref{eq:F-torus}. So a vector field $F_Q$ on $Q$ can be extended on $A$ by setting $b_0X_f|_{A}$. The resulting vector field on $T^2$ will be also denoted by $F$. By definition $F$ has no zeros in $T^2$.

\subsection{Case $M =D^2$} A function $f:D^2\to \bbR$ has a unique isolated critical point $z$, see Proposition \ref{prop:properties-f}, and $n$ critical circles $\CritC = \{C_1, C_2, \ldots C_n\}$, $n\geq 0$. This critical point $z$ belongs to $N_1\in \mathcal{N}$ with respect to the order relation \textsection\ref{subsec:orering}.

Let $U, U'$ be an open neighborhood of $z$ consisting of connected components of level-sets of $f$ and such that $\overline{U}\subset \mathrm{Int}(N_1)$ and $\overline{U'}\subset U$. A restriction $f|_{\overline{U}}$ is a Morse function, so there exist a vector field $F_U$ on $\overline{U}$ such in Lemma \ref{lm:H-like-f}. Denote by $Q$ a cylinder $\overline{D^2\setminus U'}$. By \textsection\ref{subsec:H-cylinder} there exist a vector field called $F_Q$ on $Q$ which satisfies Proposition \ref{prop-main}.  We always  assume that $F_Q$ is codirectional with $F_U$ on $U'$, otherwise we redefine $F_Q$ as $-F_Q$. 
 Let $\delta:D^2\to \bbR$ be a smooth bump function with $X_f\delta = 0$ and $\delta =1$ on $\overline{U'}$ and $\delta = 0$ on $\overline{D^2\setminus U}$.
Extend vector fields $F_U$ and $F_Q$ to $D^2$ and define a vector field $F$  by the formula $\delta F_U+ (1-\delta)F_Q$. By its definition $F$ satisfies conditions of Proposition \ref{prop-main}.

\subsection{Case $M = S^2$} A function $f:S^2\to \bbR$ has two isolated critical points. The proof of this case is similar to the case of $D^2$. We left details to the reader.

\section{Proof of Theorem \ref{thm:thm-main-2} for cylinder and torus}\label{sec:prof-main-f}
Denote by $B = [0,1]$ or $S^1$.
Let $M$ be a surface diffeomorphic to $S^1\times B$.
Let $f$ be a function from $\FF$ and $f_0:S^1\times B\to B$ be prime function on $S^1\times B$. Denote by $\Delta_f$ and $\Delta_{f_0}$  foliations on $M$ and $S^1\times B$, see Section \ref{sec:hom-prop}.

\subsection*{Step 1. Diffeomorphism $h$} Let $F$ and $F^0$ be normalized $H$-fields on $M$ and $S^1\times B$ for $f$ and $f_0$ with the flows $\bfF_t:M\to M$ and $\bfF^0_t:S^1\times [0,1]\to S^1\times [0,1]$, $t\in \bbR$.
Let also $\phi:M\times S^1\to M$ and $\psi:S^1\times B\times S^1\to S^1\times B$ be two $S^1$-actions on $M$ and $S^1\times B$ given by 
the formula  \eqref{eq:ciecle-act} for $\bfF$ and  $\bfF^0$ respectively. 

Functions $f$ and $f_0$ have no isolated critical points (see By Proposition \ref{prop:properties-f}), so $\phi$  and $\psi$ are free $S^1$-actions. In particular $\psi = \sst$, see Example \ref{ex:circle-act-primitive}.  By Lemma \ref{lm:free-S1-action} actions $\phi$ and $\psi$ are conjugated by some diffeomorphism $h:S^1\times B\to M$, i.e., $\phi_a = h\circ \psi_a\circ h^{-1}$ for all $a\in S^1.$ Such $h$ is a {\it foliated diffeomorphism}, i.e., it maps leaves of a foliation $\Delta_{f_0}$ on $S^1\times B$ to leaves of $\Delta_f$ on $M$, and it is one of the ingredients of the decomposition \eqref{eq:f-funct}.

\subsection*{Step 2. Function $\varkappa$} 
Denote by $L= \{ (1,b)\,|\, b\in B\}\subset S^1\times B.$ A submanifold $L$ transversally intersects each leaf of $\Delta_{f_0}$ at exactly one point, so $L$ is globally transversal to $\Delta_{f_0}$.  Since $f_0:S^1\times B\to B$ has no critical points, it follows that $f_0|_L:L\to B$ is a diffeomorphism, with the inverse $f^{-1}_0(b) = (1,b).$

The image $h(L)\subset M$ is also  globally transversal to $\Delta_f$ since $h$ is a foliated diffeomorphism from Step 1.
Define a function $\varkappa:B\to P$ by the rule:
\begin{equation}\label{eq:varkappa}
	\varkappa = f|_{h(L)}\circ h|_{L}\circ (f_0|_{L})^{-1},
\end{equation}
or by an explicit formula: $\varkappa(s) = f(h(1,s))$ for $s\in B.$ 
\subsection*{Step 3. Verifying {\rm (A)} and {\rm (B)}}
For each $(z,t)\in S^1\times B$ we have $(\varkappa\circ f_0) (z,b) = f(h(1,b))$ and from the other hand $f(h(1,b)) = f(h(z,b))$ since $h$ is a foliated diffeomorphism. This means that $f\circ h = \varkappa\circ f_0$, i.e, $f = \varkappa\circ f_0\circ h^{-1}$, and so a function $f$ decomposes as \eqref{eq:f-funct}, where $h$ and $\varkappa$ are as above.

From a chain rule  if $C$ is a critical circle of $f$, then $f_0(h^{-1}(C))\in B$ is a critical point of $\varkappa.$ So there is a bijection of the set of critical points of $\varkappa$ and the set of critical circles of $f.$
In particular from (3b) of Definition \ref{def:class-FF}  follows that  $\varkappa$ is not flat at each of its critical point.

If $f$ does not have critical circles, then $\varkappa'(t)\neq 0$ for all $t\in B$, so $\varkappa$ is a diffeomorphism. This completes the proof of Theorem \ref{thm:thm-main-2} for the case of $M$ diffeomorphic to  $S^1\times [0,1]$ or $T^2$.\qed

\section{Axially lemmas}\label{sec:Axially}
In this section, we will present several statements and corollaries that will be needed in our proof of Theorem \ref{thm:thm-main-2} for the case of functions on  $D^2$ and $S^2$.
\subsection{Smooth equivalence for Morse functions from $\FF$}
Let $f_0:M\to B$ be a prime function and $f$ be a Morse function from $\FF$ on  $M_0\cong M$.
The following results is known for specialists.
\begin{lemma}\label{lm:smooth-equiv}
	Let $f\in \FF$ be a Morse function on $M$. Then $f$ is smooth equivalent to a prime function $f_0$, i.e., there exists diffeomorphisms $h:M_0\to M$ and $\ell:B\to P$ such that $f = \ell\circ f_0\circ h^{-1}.$ Consequently any Morse function $g\in \mathcal{F}^{\circ}(N,P)$ on $N\cong M$ is smoothly equivalent to $f$.
\end{lemma}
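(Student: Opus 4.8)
The cylinder and torus cases are already contained in Theorem~\ref{thm:thm-main-2}, proved in Section~\ref{sec:prof-main-f}: a Morse function $f\in\FF$ on $S^1\times B$ has no critical circles, so the factor $\varkappa$ produced there has no critical points, is a diffeomorphism, and plays the role of $\ell$. Hence I would concentrate on the disk and the sphere, where $f$ has one (resp.\ two) isolated critical points and no critical circles. For the disk, let $z$ be the unique critical point of $f$, a Morse extremum. By Proposition~\ref{prop-main} choose an $H$-field $F$ for $f$; its only singularity is $z$, and by condition~(2) of Lemma~\ref{lm:H-like-f} the germ of $F$ at $z$ is a rotation field $\mp y\frac{\partial}{\partial x}\pm x\frac{\partial}{\partial y}$. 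After normalizing $F$ as in Section~\ref{subsec:normalized-vf}, its flow induces a semi-free $S^1$-action $\phi$ on $D^2$ whose only fixed point is $z$ and which is a genuine linear rotation on a small invariant disk $U\ni z$. The prime function $f_0(x,y)=x^2+y^2$ likewise induces the standard rotation action $\psi$ with fixed point the origin, linear on a small invariant disk around it.

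The plan is then to build a conjugacy $h\colon D^2\to D^2$ between $\psi$ and $\phi$ in two pieces and to extract $\ell$. On the small invariant disks both actions are linear rotations, so an equivariant linear isomorphism provides a conjugacy $h_0$ sending the origin to $z$. On the complementary closed cylinders both actions are free, so Lemma~\ref{lm:free-S1-action}(2) supplies a conjugacy $h_1$; using Lemma~\ref{lm:h-conj-id} I would isotope $h_1$ so that it agrees with $h_0$ on an overlapping collar, which glues $h_0$ and $h_1$ into a global diffeomorphism $h$ conjugating $\psi$ and $\phi$. Since $h$ carries $\psi$-orbits to $\phi$-orbits, it maps level sets of $f_0$ to level sets of $f$, so $f\circ h$ is constant on the fibres of $f_0$ and descends to a function $\ell$ with $f\circ h=\ell\circ f_0$, that is $f=\ell\circ f_0\circ h^{-1}$. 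On the locus where $f_0$ is a submersion $\ell$ is clearly smooth, and near the critical value it is smooth with nonzero derivative because $h$ is linear near $z$ and both $f$ and $f_0$ are Morse there, so $f\circ h$ is comparable to $c\,f_0$ with $c\neq0$. As $f$ has no critical circles, $\ell$ has no interior critical points, hence $\ell$ is a diffeomorphism $B\to P$; its orientation is fixed by whether $z$ is a minimum or a maximum (Proposition~\ref{prop:properties-f}(2)).

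The sphere case is entirely analogous, now with two fixed points, the minimum and the maximum. One builds linear conjugacies on two invariant polar disks, conjugates the free actions on the equatorial cylinder by Lemma~\ref{lm:free-S1-action}(2), and patches near each boundary circle via Lemma~\ref{lm:h-conj-id}; the resulting $\ell$ is again smooth and of nonzero derivative at both critical values by the local Morse comparison, hence a diffeomorphism. The final assertion is then immediate: if two Morse functions $f,g\in\FF$ on diffeomorphic surfaces are each smoothly equivalent to the corresponding prime function $f_0$, then by transitivity of smooth equivalence they are smoothly equivalent to one another.

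I expect the main obstacle to be the smooth patching across the fixed point(s). The conjugacy furnished by Lemma~\ref{lm:free-S1-action} on the free part is only determined up to a rotation along each orbit, and one must arrange via Lemma~\ref{lm:h-conj-id} that it coincides with the prescribed linear conjugacy on the collar, so that the glued map is genuinely smooth at the fixed point and still conjugates the two actions on the overlap. The companion subtlety is verifying smoothness of $\ell$ at the extremal value, which is exactly what the linearity of $h$ near the fixed point secures.
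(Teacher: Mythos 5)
Your reduction of the cylinder and torus cases to Theorem~\ref{thm:thm-main-2} matches the paper. For the disk and the sphere, however, the paper's own (sketched) proof goes a different and more elementary way: it uses the Morse charts at the extremum(s) guaranteed by Definition~\ref{def:class-F} to define $h$ on neighbourhoods of the critical points, and then extends $h$ to all of $M$ by matching trajectories of the gradient fields of $f$ and $f_0$, citing the standard argument of \cite[Theorem~2.31]{Matsumoto:2002} (essentially Reeb's theorem). No circle actions appear. Your route instead recycles the machinery of Sections~\ref{sec:Hfilds-normaliz}--\ref{subsec:normalized-vf}: conjugate the semi-free $S^1$-actions induced by the normalized $H$-fields of $f$ and $f_0$, linearly near the fixed point(s) and via Lemma~\ref{lm:free-S1-action} on the free part, and then read $\ell$ off the orbit space. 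This is a legitimate alternative whose virtue is that it reuses tools already built for the main theorem; its cost is that it is heavier and forces you to confront the patching and quotient-smoothness issues that the gradient-flow argument sidesteps.

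Two of your steps need more than you give them. First, Lemma~\ref{lm:h-conj-id} does not literally apply to the gluing: it normalizes a \emph{single} conjugacy to the identity on a sub-collar under the hypothesis that the two \emph{actions} coincide there, whereas you must reconcile two different conjugacies $h_0$, $h_1$ of $\psi$ with $\phi$. What is actually available is that $h_1^{-1}\circ h_0$ is a $\psi$-equivariant embedding of the overlap annulus, hence of the form $(z,s)\mapsto(z\,g(s),\sigma(s))$ by Lemma~\ref{lm:diff-commutes-angles}; the angular part $g$ is removed by the shift-function and bump-function device from the proof of Lemma~\ref{lm:h-conj-id}, but the radial part $\sigma$ must be separately normalized (extended to a diffeomorphism of the radial interval equal to the identity near the outer end of the collar) before the glued map is a smooth diffeomorphism. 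Second, ``$f\circ h$ is comparable to $c\,f_0$'' does not establish smoothness of $\ell$ at the extremal value; the correct statement is that in the linearizing chart $f\circ h$ is rotation-invariant and has a nondegenerate extremum, hence by Whitney's theorem on even functions \cite{Whitney:DukeMathJ:1943} (quoted in the introduction) it equals $\ell(x^2+y^2)$ for a smooth $\ell$ with $\ell'(0)\neq 0$. With these two repairs your argument closes, and the final transitivity assertion is immediate as you say.
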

\begin{proof}
(Sketch) If $M$ is diffeomorphic to $S^1\times [0,1]$ or $T^2$ then $f$ has no critical points and Lemma \ref{lm:smooth-equiv} follows from Lemma \ref{lm:free-S1-action} or from Theorem \ref{thm:thm-main-2}.

If $M$ is diffeomorphic to $D^2$ or $S^2$, then the function $f$ has, respectively, one or two Morse extrema. 
There is a diffeomorphism $h$ mapping neighborhood(s) of  extreme(s) of $f$ to the corresponding extreme(s) of a prime functions $f_0$. Such diffeomorphism $h$ is defined by charting maps, see Definition \ref{def:class-F}. Then $h$ can be extended to a diffeomorphism of $M$ by using trajectories of gradient fields of $f$ and $f_0$, see e.g., proof of  Theorem 2.31 \cite{Matsumoto:2002}.	
\end{proof}

\subsection{One corollary of Lemma \ref{lm:h-conj-id} for functions on cylinder} \label{subsec:One-corollary}
Let $M,N$ be surfaces diffeomorphic to $M_0 = S^1\times [0,1]$, $f\in \FF$, $g\in \mathcal{F}^{\circ}(N,P)$ be a Morse function, and $f_0:M_0\to [0,1]$ be a prime function. 
The following result is a simple corollary of Theorem \ref{thm:thm-main-2}.
\begin{corollary}\label{cor:conj-cylinder}
	For a function $f$  there exist a diffeomorphism $h:N\to M$ and a smooth function $\alpha:\mathrm{Im}(g)\to P$ satisfying {\rm (A)} and {\rm (B)} and such that $f = \alpha\circ g\circ h^{-1}$.
\end{corollary}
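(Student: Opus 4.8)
The plan is to chain two decompositions: the normal form of $f$ from the Main Theorem, and the smooth equivalence of the Morse function $g$ to a prime function. First I would apply Theorem \ref{thm:thm-main-2} to $f\in\mathcal{F}^{\circ}(M,P)$, obtaining a diffeomorphism $h_1:M_0\to M$ and a smooth function $\varkappa:f_0(M_0)\to P$ satisfying {\rm (A)} and {\rm (B)} with $f=\varkappa\circ f_0\circ h_1^{-1}$. Next, since $N$ is a cylinder and $g$ is Morse, Proposition \ref{prop:properties-f} forces $g$ to have no isolated critical points, hence---being Morse, so having no critical circles either---no critical points at all. Thus Lemma \ref{lm:smooth-equiv} applies to $g$ and yields diffeomorphisms $h_2:M_0\to N$ and $\ell:B\to P$ with $g=\ell\circ f_0\circ h_2^{-1}$, equivalently $f_0=\ell^{-1}\circ g\circ h_2$.

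Substituting the second relation into the first gives
\begin{equation*}
f=\varkappa\circ f_0\circ h_1^{-1}=\varkappa\circ\ell^{-1}\circ g\circ h_2\circ h_1^{-1},
\end{equation*}
so setting $\alpha:=\varkappa\circ\ell^{-1}$ and $h:=h_1\circ h_2^{-1}:N\to M$ produces the desired factorization $f=\alpha\circ g\circ h^{-1}$; indeed $g\circ h_2=\ell\circ f_0$, whence $\alpha\circ g\circ h^{-1}=\varkappa\circ\ell^{-1}\circ\ell\circ f_0\circ h_1^{-1}=\varkappa\circ f_0\circ h_1^{-1}=f$. Note that $\alpha$ is defined on $\mathrm{Im}(g)=\ell(f_0(M_0))=\ell(B)$, so its domain is exactly $\mathrm{Im}(g)$, as required.

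It then remains to verify that $\alpha$ inherits {\rm (A)} and {\rm (B)} from $\varkappa$. Since $\ell$ is a diffeomorphism, the chain rule gives a bijection between the critical points of $\alpha=\varkappa\circ\ell^{-1}$ and those of $\varkappa$, under which flatness and non-flatness (the vanishing or not of all derivatives) are preserved; hence $\alpha$ has only finitely many critical points and is not flat at any of them, which is {\rm (A)}. For {\rm (B)}, the critical set $\Sigma_g$ is empty, so the condition at $g(\Sigma_g)$ is vacuous, while $g(\partial N)=\ell(f_0(\partial M_0))$ corresponds under $\ell^{-1}$ to $f_0(\partial M_0)$, where $\varkappa$ has no critical points by condition {\rm (B)} of the Main Theorem; thus $\alpha$ has no critical points on $g(\partial N)$. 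The argument is essentially bookkeeping, and I expect no serious obstacle: the only points demanding care are tracking the domains and codomains of the composed maps and confirming that $\ell$ transports {\rm (A)} and {\rm (B)} from $\varkappa$ to $\alpha$ without loss.
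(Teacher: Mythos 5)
Your proposal is correct and follows essentially the same route as the paper: apply Theorem \ref{thm:thm-main-2} to $f$, apply Lemma \ref{lm:smooth-equiv} to the Morse function $g$, eliminate $f_0$ between the two factorizations, and set $\alpha=\varkappa\circ\ell^{-1}$, $h=h_1\circ h_2^{-1}$. Your explicit verification that the diffeomorphism $\ell$ transports conditions (A) and (B) from $\varkappa$ to $\alpha$ is a welcome elaboration of a step the paper dismisses as obvious.
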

\begin{proof}
By Theorem \ref{thm:thm-main-2} we have $f = \varkappa_0\circ f_0\circ h_0^{-1}$, where $h_0:M_0\to M$ is a diffeomorphism and $\varkappa_0:[0,1]\to P$ is a smooth function satisfying (A) and (B).  From the other hand, by Lemma \ref{lm:smooth-equiv} we have $g = \ell\circ f_0\circ h_1^{-1}$ for some diffeomorphisms $h_1:M_0\to N$ and $\ell:[0,1]\to P$. Expressing $f_0$ from the last formula we obtain:

\begin{align}
	f &= \varkappa_0\circ f_0\circ h_0^{-1} \nonumber \\ 
	&=\varkappa_0\circ (\ell^{-1}\circ g\circ h_1)\circ h_0^{-1} \nonumber\\
	&= (\varkappa_0\circ \ell^{-1})\circ g_0\circ (h_0\circ h_1^{-1})^{-1} =  \alpha\circ g\circ h^{-1}, \label{eq:compos-function}
\end{align}
where $\alpha = \varkappa_0\circ \ell^{-1}$ and $h = h_0\circ h_1^{-1}$ Obviously that $\alpha$ satisfies (A) and (B).
\end{proof}

Let also $L= \{ (1,b)\,|\, b\in [0,1]\}\subset S^1\times [0,1]$ be such in Step 2. Then applying Theorem \ref{thm:thm-main-2} proved for the case of cylinder for $f$ and $g$  on can obtain
\begin{equation}\label{eq:alpha}
\alpha = \varkappa_0\circ \ell^{-1} = f|_{h_0(L)}\circ (h_0|_L\circ h_1^{-1}|_L) \circ g^{-1}|_{h_1(L)}.
\end{equation}

The following lemma is our main proof tool for the proof of  Theorem \ref{thm:thm-main-2} for functions on on disks and spheres.
\begin{lemma}\label{cor:main}
	Let $M$ be a surface diffeomorphic to $S^1\times [0,1],$
	$f,g$ be functions from $\mathcal{F}^{\circ}(M,\bbR)$ such that $g$ is Morse function. 	
	Suppose that $f$ is decomposed as
	 $f = \alpha\circ g\circ h^{-1}$ for some diffeomorphism $h:M\to M$ and a smooth function $\alpha:g(M)\to P$ satisfying {\rm(A)} and {\rm (B)}, see {\rm Corollary \ref{cor:conj-cylinder}}.
	Let also $W, V$ be open connected $f$-foliated  subsets of $M$ such that $\overline{V}\subset W$.
	Assume that $f = g$ on $\overline{W}$.
	Then
	\begin{itemize}
		\item a diffeomorphism $h$ can be isotoped to a diffeomorphism $\tilde{h}:M\to M$ such that $\tilde{h} = \id$ on $\overline{V}$ and $\tilde{h} = h$ on $\overline{M\setminus W}$, and 	$f = \tilde{\alpha}\circ g\circ {\tilde{h}}^{-1}$ for some smooth function $\tilde{\alpha}:g(M)\to P$ satisfying {\rm (A)} and {\rm (B)}, and
		\item $\tilde{\alpha}(t) = t$ for $t\in g({V})$.
	\end{itemize}
\end{lemma}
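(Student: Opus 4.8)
The plan is to apply Lemma \ref{lm:h-conj-id} to the $S^1$-actions induced by the $H$-fields of $f$ and $g$, and then read off the modified $\tilde\alpha$ from the new diffeomorphism $\tilde h$, in close analogy with the three-step construction in Step 1--3 of Section \ref{sec:prof-main-f}. First I would introduce normalized $H$-fields $F$ and $G$ for $f$ and $g$ (Proposition \ref{prop-main}), with induced semi-free---here actually free, since $M\cong\CYL$ has no isolated critical points---$S^1$-actions $\phi$ (from $G$, via a flow $\bfG_t$) and the action $\psi$ conjugate to it through $h$. The hypothesis $f = g$ on $\overline{W}$ should force these two actions to coincide on an $f$-foliated subset $U$ with $\overline{V}\subset U\subset W$: since $f=g$ there, the foliations $\Delta_f$ and $\Delta_g$ agree on $\overline{W}$, and after normalizing both fields to have unit period the induced rotation actions along common leaves must be identical. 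This is the step I expect to require the most care: one must check that equality of \emph{functions} on $\overline W$, together with the normalization of the $H$-fields, genuinely yields equality of the two \emph{parametrized} $S^1$-actions on a slightly shrunken $f$-foliated open set $U$, not merely of the underlying foliations.

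Granting that $\psi = \phi$ on $U$, \textbf{Lemma \ref{lm:h-conj-id} applies verbatim}: it produces an isotoped diffeomorphism $\tilde h:M\to M$ conjugating $\psi$ and $\phi$, with $\tilde h = \id$ on $\overline V$ and $\tilde h = h$ on $M\setminus U$. Shrinking $W$ if necessary so that $M\setminus W \subset M\setminus U$, we get $\tilde h = h$ on $\overline{M\setminus W}$ as claimed, and $\tilde h$ is isotopic to $h$ via the explicit isotopy \eqref{eq:H-isotopy}. The new $\tilde h$ is still a foliated diffeomorphism (being a composition of $h$ with a shift $\bfG_{\delta t_0}$ along trajectories of $G$), so the argument of Step 2 of Section \ref{sec:prof-main-f} reapplies: setting $L = \{(1,b)\mid b\in[0,1]\}$ and using formula \eqref{eq:alpha}, one defines
\[
\tilde\alpha = f|_{\tilde h(L)}\circ \bigl(\tilde h|_L\bigr)\circ \bigl(g|_{\tilde h_1(L)}\bigr)^{-1},
\]
that is, $\tilde\alpha(t) = f(\tilde h(g^{-1}(t)))$ along the transversal, which again satisfies (A) and (B) because the critical points of $\tilde\alpha$ biject with the critical circles of $f$ and flatness is inherited from (3b) of Definition \ref{def:class-FF}.

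Finally I would verify $\tilde\alpha(t) = t$ for $t\in g(V)$. On $\overline V$ we have $\tilde h = \id$, so for a point $x\in V$ lying over $t = g(x)$ the definition gives $\tilde\alpha(t) = f(\tilde h(x)) = f(x)$; but $V\subset \overline W$, where the hypothesis $f = g$ holds, so $f(x) = g(x) = t$. Hence $\tilde\alpha$ is the identity on $g(V)$, which is precisely the second bullet. The relation $f = \tilde\alpha\circ g\circ \tilde h^{-1}$ then follows exactly as in Step 3 of Section \ref{sec:prof-main-f}: since $\tilde h$ is foliated, $f\circ\tilde h$ is constant on each leaf of $\Delta_g$, so it factors through $g$ as $f\circ\tilde h = \tilde\alpha\circ g$. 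The only genuine obstacle, as noted, is establishing the coincidence of the two $S^1$-actions on $U$ from the equality $f=g$ on $\overline W$; once that is in hand, the statement is an essentially formal consequence of Lemma \ref{lm:h-conj-id} combined with the cylinder case of Theorem \ref{thm:thm-main-2}.
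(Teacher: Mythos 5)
Your proposal follows essentially the same route as the paper's proof: pass to the free $S^1$-actions induced by the normalized $H$-fields of $f$ and $g$, invoke Lemma \ref{lm:h-conj-id} to replace $h$ by $\tilde h$ with $\tilde h=\id$ on $\overline V$, and then read off $\tilde\alpha$ from the transversal formula \eqref{eq:alpha}, checking $\tilde\alpha=\id$ on $g(V)$ exactly as you do. The one point you flag as delicate --- that $f=g$ on $\overline W$ yields coincidence of the two parametrized actions there, not just of the foliations --- is in fact simply asserted in the paper's proof, so your version is, if anything, more explicit about where care is needed.
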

\begin{proof}
	Assume that a smooth function $\alpha$ and a diffeomorphism $h$  such that $f = \alpha\circ g\circ h^{-1}$ are  defined by the formula \eqref{eq:compos-function}.
	
	Let $\phi,\psi:M\times S^1\to M$ be free $S^1$-actions induced by functions $f$ and $g$ by their normalized $H$-fields, see \eqref{eq:ciecle-act}.
	Since $f = g$ coincide on $\overline{W}$, it follows that $\phi_a = \psi_a$ on $\overline{W}$ for all $a\in S^1.$ A needed statement is the consequence of  Lemma \ref{lm:h-conj-id}, i.e., $h$ can be isotoped to a diffeomorphism $\tilde{h}:M\to M$ via an isotopy $H_s:M\to M$  given by \eqref{eq:H-isotopy} between $H_0 = h$ and $H_1 = \tilde{h}$ satisfying  $\tilde{h} = \id$ on $\overline{V}$ and $\tilde{h} = h$ on $\overline{M\setminus W}$. 
	
	An isotopy $H_s$ given by \eqref{eq:H-isotopy} induces an isotopy $\tilde{H}_s:S^1\times [0,1]\to M$  given by $\tilde{H}_s = H_s\circ h_1$ between $\tilde{H}_0  = h_0\circ h_1^{-1} \circ h_1 = h_0$ and $\tilde{H}_1 = h_1$.

	 These isotopies defines a path $\alpha:[0,1]\to C^{\infty}(g(M), P)$ by the formula
	\begin{equation}
		\alpha_s = f|_{h_0(L)}\circ H_s|_L\circ \large( g|_{\tilde{H}_{1-s}(L)} \large)^{-1}
	\end{equation}
	between $\alpha_0 = f|_{h_0(L)}\circ h|_L\circ (g|_{h_1(L)})^{-1} = \alpha$, see \eqref{eq:alpha}, and 
	$\alpha_1 = f|_{h_0(L)}\circ \tilde{h}|_L\circ (g|_{h_0(L)})^{-1}$.
	Since $\tilde{h} = \id$ on $\overline{V}$, a function $\alpha_1$ on $g(V)$ has the form
	\begin{align*}
		\alpha_1 &= f|_{h_0(L)}\circ \tilde{h}|_L\circ (g|_{h_0(L)})^{-1}\\
		&=f|_{h_0(L)}\circ (g|_{h_0(L)})^{-1} &\text{($\tilde{h} = \id$ on $\overline{V}$)} \\
		&= \id & \text{(since $f=g$ on $\overline{W}$)} 
	\end{align*}
Obviously that $\alpha_1$ satisfies (A) and (B).
Setting $\tilde{\alpha} = \alpha_1$ we obtain a needed decomposition $f = \tilde{\alpha}\circ g\circ \tilde{h}^{-1}$.	 
\end{proof}

\subsection{Preparation lemma for functions on $D^2$}
Let $M$ be a surface diffeomorphic to $D^2$. Recall that any function $f$ from $\FF$ has a unique non-degenerate critical point $z$, see Proposition \ref{prop:properties-f}. 

The following lemma is a partial case of Theorem \ref{thm:thm-main-2} for the case of $D^2$ and it is needed to simplify the proof of our main result.

\begin{lemma}\label{lm:prep2} Let $M$ be a surface diffeomorphic to $D^2$,
	$f,g$ be a function from $\FF$ such that	
	$g$ is Morse function.  Let also $V$ be open connected and $f$-foliated neighborhood of $z$ such that $\overline{V}$ does not contain critical circles of $f$.  Assume that   $f = g$ on  $\overline{V}$. Then $f = \varkappa\circ g\circ h^{-1}$ for some diffeomorphism $h:M\to M$ and a smooth function $g:\mathrm{Im}(g)\to \bbR$ satisfying {\rm (A)} and {\rm (B)}.
\end{lemma}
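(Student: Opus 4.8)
The plan is to reduce to the cylinder case, which is already settled by Corollary~\ref{cor:conj-cylinder} and Lemma~\ref{cor:main}, by excising a small foliated disk around the extremum and gluing. Since $f=g$ on $\overline{V}$ and $z$ is the unique critical point of $f$ in $V$, the derivatives of $f$ and $g$ coincide near $z$, so $z$ is also the (unique) extremum of the Morse function $g$; without loss of generality assume it is a minimum, and write $c_0=f(z)=g(z)$. First I would fix three nested open connected $f$-foliated neighborhoods of $z$,
$$
\overline{V_2}\subset V_1\subset \overline{V_1}\subset V_0\subset \overline{V_0}\subset V,
$$
each diffeomorphic to a disk and free of critical circles (possible because $\overline{V}$ contains none). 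Then $Q=\overline{M\setminus V_2}$ is a cylinder, $f|_Q\in\mathcal{F}^{\circ}(Q,\bbR)$, and $g|_Q$ is a fixed-point-free Morse function on $Q$, since the common critical point $z$ lies inside $V_2$.

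Next I would run the cylinder machinery on $Q$. By Corollary~\ref{cor:conj-cylinder} there is a decomposition $f|_Q=\alpha\circ g|_Q\circ h_Q^{-1}$. Choosing the annular collars $W_Q=V_0\setminus\overline{V_2}$ and $V_Q=V_1\setminus\overline{V_2}$ of the inner boundary $\partial V_2$ (so $\overline{V_Q}\subset W_Q$), the hypothesis $f=g$ on $\overline{W_Q}\subset\overline{V}$ lets me apply Lemma~\ref{cor:main}. This produces an isotoped diffeomorphism $\tilde h_Q$ of $Q$ with $\tilde h_Q=\id$ on $\overline{V_Q}$ and $\tilde h_Q=h_Q$ on $\overline{Q\setminus W_Q}$, together with a smooth $\tilde\alpha$ satisfying (A) and (B) such that $f|_Q=\tilde\alpha\circ g|_Q\circ\tilde h_Q^{-1}$ and $\tilde\alpha(t)=t$ for all $t\in g(V_Q)$.

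Finally I would glue across the level circle $\partial V_2$. Because $\tilde h_Q$ is the identity on the collar $\overline{V_Q}$, which meets $\partial V_2$ from the outside, the map $h$ defined by $\tilde h_Q$ on $Q$ and by $\id$ on $V_2$ is a diffeomorphism of $M$ that is the identity near $z$. Writing $a=g(\partial V_2)$ and $b=g(\partial V_1)$, so $g(V_2)=[c_0,a)$ and $g(Q)=[a,c_{\max}]$ with $g(V_Q)=(a,b)$, I would define $\varkappa$ on $\mathrm{Im}(g)=[c_0,c_{\max}]$ by $\varkappa=\id$ on $g(V_2)$ and $\varkappa=\tilde\alpha$ on $g(Q)$; these agree because $\tilde\alpha$ is the identity on the whole interval $g(V_Q)$, so both pieces equal the identity on a common interval around $a$ and $\varkappa$ is smooth (indeed $\varkappa=\id$ on $[c_0,b)$). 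One checks $\varkappa\circ g\circ h^{-1}=g=f$ on $V_2$ (using $f=g$ on $\overline{V}$) and $\varkappa\circ g\circ h^{-1}=\tilde\alpha\circ g|_Q\circ\tilde h_Q^{-1}=f$ on $Q$, giving $f=\varkappa\circ g\circ h^{-1}$ on all of $M$. Condition (A) is inherited from $\tilde\alpha$ since $\varkappa$ only extends it by the identity, and (B) holds since $\varkappa=\id$ near the extremal value $c_0$ and $\tilde\alpha$ already has no critical points over $g(\partial M)$.

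The only real obstacle is the smoothness of the gluing at $\partial V_2$: the cylinder conjugacy must be made to equal the identity on a full inner collar (not merely to be isotopic to something fixing the boundary), and the two definitions of $\varkappa$ must overlap on an interval on which both are the identity. Both requirements are delivered verbatim by the ``identity near the boundary'' conclusion of Lemma~\ref{cor:main}; once that is invoked, the remaining checks are routine.
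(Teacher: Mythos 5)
Your proof is correct and follows essentially the same route as the paper: excise a small $f$-foliated disk around the extremum, apply Corollary~\ref{cor:conj-cylinder} and then Lemma~\ref{cor:main} on the resulting cylinder to force the conjugating diffeomorphism and the function $\tilde\alpha$ to be the identity on an inner collar, and finally extend both by the identity over the excised disk. Your verification of the smoothness of the glued $\varkappa$ near the level $g(\partial V_2)$ is in fact somewhat more explicit than the paper's, which simply extends by identities.
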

\begin{proof}
	Let $U,W$ be $f$-foliated neighborhoods of $z$ diffeomorphic to $D^2$ and such that $\overline{U}\subset W$ and $\overline{W}\subset V$. Denote by $Q = \overline{M\setminus U}$, $L = \overline{M\setminus W}$ and $K = \overline{Q\setminus L}$ subcylinders of $M$. Note that $Q$ contains all critical circles of $f$.
	%
	Restrictions $f|_Q$ and $g|_Q$ belongs to $\mathcal{F}^{\circ}(Q,\bbR)$, and $g$ has no critical points. So by Corollary \ref{cor:conj-cylinder} there exist a diffeomorphism $r:Q\to Q$ and a smooth function $\alpha:g(Q)\to B$ satisfying (A) and (B) such that
	\begin{equation}
		f|_Q = \alpha\circ g|_Q\circ r^{-1}.
	\end{equation}
	Since $f = g$ on $V$, it follows from Lemma \ref{cor:main} that there exist a diffeomorphism $\tilde{r}:Q\to Q$ such that $\tilde{r} = \id$ on $K$, $\tilde{r} = r$ on $M\setminus W$ and $\tilde{\alpha}:g(Q)\to P$ satisfying (A) and (B) and $\tilde{\alpha} = \id$ on $g(K)$ so that $f|_Q = \alpha\circ g|_Q\circ \tilde{r}^{-1}$.
	We  extend $\tilde{r}$ on $U$ and $\tilde{\alpha}$ on $g(U)$ by identities and denote these extensions by $h$ and $\varkappa$. Finally we obtain that $f = \varkappa\circ g\circ {h}^{-1}$. 
	
	Since $\varkappa|_Q$ satisfies (A) and (B), it follows that $\varkappa$ also satisfies them by its definition. 
\end{proof}

\subsection{Approximations of functions from $\FF$ on disk and sphere}
It is knows that any smooth function, so any $f\in \FF$ can be  smoothly approximated by some Morse function from the homotopy class $[f]$. The following lemma is a variant of this statement needed for our proofs.
\begin{lemma}[cf. Theorem 2.20 \cite{Matsumoto:2002}]\label{lm:prep1}
	Let $M$ be surface diffeomorphic to $D^2$ or $S^2$,
	$f$ be a function from $\FF$, $z$ be its isolated critical point, and $V$ is a small open connected $f$-foliated subset of $M$ such that $\overline{V}$ does not contain critical circles of $f$. Then there exists a Morse function $g\in \FF$ such that $f = g$ on $\overline{V}.$
\end{lemma}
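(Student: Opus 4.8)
The plan is to excise small $f$-foliated disks around the isolated critical point(s) of $f$, reduce the problem on the resulting cylinder to the already-established cylinder case of Theorem \ref{thm:thm-main-2}, and there eliminate the critical circles by replacing the value-reparametrizing function $\varkappa$ with a diffeomorphism that still agrees with $\varkappa$ on the part of $J$ coming from $\overline{V}$.

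Consider first $M\cong D^2$. I would choose $f$-foliated disk neighborhoods $U\subset W$ of $z$ with $\overline{U}\subset W$, $\overline{W}\subset V$, and set $Q=\overline{M\setminus U}$, a cylinder containing every critical circle of $f$. Since $f|_Q\in\FF$, the cylinder case of Theorem \ref{thm:thm-main-2} gives $f|_Q=\varkappa\circ f_0\circ h^{-1}$, where $f_0$ is the prime function on the standard cylinder, $h$ is a foliated diffeomorphism, and $\varkappa$ is smooth on the interval $J=f_0(M_0)$. The critical points of $\varkappa$ correspond bijectively to the critical circles of $f|_Q$; in particular, on the subinterval $J_0=f_0(h^{-1}(\overline{V}\cap Q))\subset J$ adjacent to $\partial U$, the function $\varkappa$ has no critical points and is strictly monotone. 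I would then pick a diffeomorphism $\ell\colon J\to\ell(J)$ that coincides with $\varkappa$ on $J_0$ and is strictly monotone on all of $J$, obtained by extending $\varkappa|_{J_0}$ monotonically past the critical points of $\varkappa$. Define $g=f$ on $\overline{U}$ together with the collar where $\ell=\varkappa$, and $g=\ell\circ f_0\circ h^{-1}$ on $Q$; the two formulas agree on their overlap (there $\ell=\varkappa$), so $g$ is smooth on $M$. As $\ell$ is a diffeomorphism, $g|_Q$ has no critical points, so the only critical point of $g$ is the Morse extremum $z$, while $g$ is constant on $\partial D^2$ because $f_0\circ h^{-1}$ is; hence $g\in\FF$ is Morse and $g=f$ on $\overline{V}$ by construction.

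For $M\cong S^2$, $f$ has a second isolated critical point $z'$ (Proposition \ref{prop:properties-f}), and I would excise disjoint $f$-foliated disks $U,U'$ about $z,z'$ with $\overline{U}\subset V$, so that $Q=\overline{M\setminus(U\cup U')}$ is a cylinder carrying all critical circles, and proceed exactly as above, taking $\ell$ to agree with $\varkappa$ only near the $z$-side end of $J$ and to be monotone throughout. The one additional step is to cap the $U'$-end: since $\ell$ is a diffeomorphism, $g|_Q$ has regular concentric level circles near $\partial U'$, so I would glue a Morse extremum at $z'$ (of the type forced by the monotonicity of $\ell$, which need not coincide with that of $f$ at $z'$) using a standard collar chart, as in the smooth-equivalence argument of Lemma \ref{lm:smooth-equiv}. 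This yields a Morse $g\in\FF$ with exactly the two extrema $z,z'$ and $g=f$ on $\overline{V}$.

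The main obstacle is twofold. First, choosing $\ell$ monotone while keeping $\ell=\varkappa$ on $J_0$: this is precisely where the hypothesis that $\overline{V}$ contains no critical circle is essential, since it makes $\varkappa$ already monotone on $J_0$ and hence completable to a diffeomorphism without creating any new critical points --- so no saddles appear and $g$ stays inside $\FF$. Second, for $S^2$, verifying that the cap at $z'$ attaches smoothly; this is routine Morse theory, but must be arranged so that the derivatives of $\ell$ near the $U'$-end match the chosen Morse model $c\pm(x^2+y^2)$, which I can always secure because $\ell$ is free to be modified away from $J_0$.
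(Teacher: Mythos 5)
Your proof is correct, but it takes a genuinely different route from the paper's. The paper's (sketched) argument is purely Morse-theoretic: it perturbs $f$ near each critical circle to obtain a Morse function agreeing with $f$ on $\overline{V}$, which creates pairs of nondegenerate saddles and extrema, and then removes these pairs away from $\overline{V}$ using the rearrangement and cancellation theorems of \cite{Matsumoto:2002}. You instead excise $f$-foliated disk neighborhoods of the extremum(a), apply the already-proved cylinder case of Theorem \ref{thm:thm-main-2} to the remaining annulus $Q$ to write $f|_Q=\varkappa\circ f_0\circ h^{-1}$, and kill all critical circles in one stroke by replacing $\varkappa$ with a diffeomorphism $\ell$ that agrees with $\varkappa$ on the subinterval corresponding to $\overline{V}\cap Q$ --- which is possible precisely because $\overline{V}$ carries no critical circles, so $\varkappa$ is regular there. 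This is not circular, since the cylinder case of Theorem \ref{thm:thm-main-2} is established in Section \ref{sec:prof-main-f} independently of Lemma \ref{lm:prep1}, and it buys an explicit formula for $g$ while avoiding handle cancellation entirely; the paper's route is more standard and self-contained relative to classical Morse theory but leaves the cancellation details to \cite{Matsumoto:2002}. The only place your argument needs real care is the $S^2$ case, where the second disk must be capped off after $\ell$ has changed the values of $g$ near its boundary; your plan (extend the monotone reparametrization $\ell\circ\varkappa^{-1}$ of $f$ across the disk around $z'$ and invoke the Morse lemma to restore the local model $\pm(x^2+y^2)$) handles this adequately.
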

\noindent For the sake of completeness, we will recall the procedure how to obtain $g$ from the given $f$ such as above.
\begin{proof}
	(Sketch)
Perturbing $f$ near each critical circle of $f$ sufficiently enough that the resulting function $g_0:M\to \bbR$ is Morse function which coincides with $f$ on $\overline{V},$ i.e., $g_0$ has no critical circles. Such obtained $g_0$ will have some ``new'' non-degenerated extremes and saddles.  This ``new'' saddles and extemums come into pairs.
After some ``rearranging'' critical points (Theorem 3.22 \cite{Matsumoto:2002}) these ``new'' saddles and local extremums can be ``canceled'' by (Theorem 3.28 \cite{Matsumoto:2002}) away from $\overline{V}$ to obtain a needed function  $g$. 
\end{proof}

\section{Proof of Theorem \ref{thm:thm-main-2} for $D^2$ and $S^2$}\label{sec:prof-main-f-2}
\subsection{Proof of Theorem \ref{thm:thm-main-2} for the case of $D^2$}\label{sec:proof-disk-sphere}
Let $M$ be a surface diffeomorphic to $M_0 = D^2$ and $f$ be a function from $\FF$. Denote by $z$ a unique non-degenerated critical point of $f$, see Proposition \ref{prop:properties-f}.

{\it Step 1.} Let $V$ be an open connected and $f$-foliated neighborhood of $z$ such that $\overline{V}$ does not contain critical circles of $f$. By Lemma \ref{lm:prep1} there exists a Morse function $g\in \FF$ such that $f = g$ on $\overline{V}$.

{\it Step 2.} By Lemma \ref{lm:prep2} there exist a diffeomorphism $h:M\to M$ and a smooth function $\alpha:g(M)\to P$ satisfying (A) and (B), and such that 
$f = \alpha\circ g\circ h_0^{-1}$.

{\it Step 3.} By Lemma \ref{lm:smooth-equiv} $g$ is smoothly equivalent  to a prime function $f_0:D^2\to B,$ i.e., $g = \ell\circ f_0\circ h_1^{-1}$ for some diffeomorphisms $h_1:M_0\to M$ and $\ell:B\to P$.

{\it Step 4.} Combining results from Step 2 and Step 3 we obtain 
\begin{align*}
	f &= \alpha\circ g\circ h^{-1}_0\\ &= \alpha\circ (\ell\circ f_0\circ h_1^{-1})\circ h_0^{-1}\\ &=\varkappa\circ f_0\circ h^{-1} 
\end{align*}
where $\varkappa = \alpha\circ \ell$ and $h = h_0\circ h_1$.
\qed

A next fact is a consequence of 
\begin{corollary}\label{cor:4}
	Let $f$ be  function from $\FF$ and $g$ be a Morse function from $\mathcal{F}^{\circ}(N,P)$, where $N$ is diffeomorphic to $M\cong D^2$. 
	\begin{enumerate}
		\item Then $f = \alpha\circ g\circ h^{-1}$ for some diffeomorphism $h:N\to M$ and a smooth function $\alpha: \mathrm{Im}(g)\to P$ satisfying {\rm (A)} and {\rm (B)}.
		\item Assume that $M = N$. Let $W, U$ be open connected and $f$-foliated neighborhood of $\partial M$ such that $\overline{W}\subset U$ and $\overline{U}$ does not contain critical circles of $f$.
		If $f = g$ on $\overline{U}$, then there exist a diffeomorphism $\tilde{h}:M\to M$ and a smooth function $\tilde{\alpha}:g(M)\to P$ such that $f = \tilde{\alpha}\circ g\circ \tilde{h}^{-1}$, where $\tilde{\alpha} = \id$ in $g(\overline{W})$, $\tilde{\alpha} = \alpha$ on $M\setminus W$ and $\tilde{h} = \id$ on $\overline{W}$, $\tilde{h} = h$ on $M\setminus U.$
	\end{enumerate}
\end{corollary}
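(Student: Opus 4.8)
The plan is to prove the two statements of Corollary \ref{cor:4} by reducing them to results already established for the cylinder and to the disk case of Theorem \ref{thm:thm-main-2} just proved.

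For part (1), I would first invoke the disk case of Theorem \ref{thm:thm-main-2}, which gives a prime function $f_0:M_0\to B$, a diffeomorphism $h_0:M_0\to M$, and a smooth function $\varkappa:f_0(M_0)\to P$ satisfying (A) and (B) with $f=\varkappa\circ f_0\circ h_0^{-1}$. Independently, since $g$ is a Morse function in $\mathcal{F}^{\circ}(N,P)$ with $N\cong D^2$, Lemma \ref{lm:smooth-equiv} yields diffeomorphisms $h_1:M_0\to N$ and $\ell:B\to P$ with $g=\ell\circ f_0\circ h_1^{-1}$, hence $f_0=\ell^{-1}\circ g\circ h_1$. Substituting this into the expression for $f$ and regrouping exactly as in the proof of Corollary \ref{cor:conj-cylinder} gives
\begin{equation*}
f=(\varkappa\circ \ell^{-1})\circ g\circ (h_0\circ h_1^{-1})^{-1}=\alpha\circ g\circ h^{-1},
\end{equation*}
where $\alpha=\varkappa\circ \ell^{-1}$ and $h=h_0\circ h_1^{-1}$. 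Since $\ell$ is a diffeomorphism, $\alpha$ inherits conditions (A) and (B) from $\varkappa$, and this finishes part (1).

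For part (2), where $M=N$ and $f=g$ on $\overline{U}$, I would feed the decomposition from part (1) into the localization machinery of Lemma \ref{cor:main}. The geometric picture is that $U$ is a collar neighborhood of $\partial M$, so removing a smaller $f$-foliated disk around the critical point $z$ turns the relevant region into a cylinder on which Lemma \ref{cor:main} applies directly. Concretely, I would choose an $f$-foliated disk neighborhood $D$ of $z$ whose closure avoids all critical circles and is disjoint from $\overline{U}$, set $Q=\overline{M\setminus D}$ (a cylinder containing $\overline{U}$), and restrict the decomposition of part (1) to $Q$. Applying Lemma \ref{cor:main} on $Q$ with the foliated subsets $W\subset U$ produces an isotoped diffeomorphism that equals $\mathrm{id}$ on $\overline{W}$ and equals the original on $M\setminus U$, together with a smooth function equal to $\mathrm{id}$ on $g(\overline{W})$; extending both by the identity across $D$ (exactly as in the proof of Lemma \ref{lm:prep2}) gives the required $\tilde{h}$ and $\tilde{\alpha}$ with $f=\tilde{\alpha}\circ g\circ\tilde{h}^{-1}$ and the stated boundary behavior.

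The main obstacle I anticipate is bookkeeping the compatibility of the two sources of data near the seam where the disk $D$ is glued back to the cylinder $Q$: I must ensure that the extension of $\tilde{h}$ and $\tilde{\alpha}$ by the identity across $\overline{D}$ remains \emph{smooth} and still satisfies (A) and (B), which requires that the isotopy supplied by Lemma \ref{cor:main} be supported away from $\overline{D}$. This is guaranteed by choosing $D$ inside the region where $f=g$ and by the fact that the isotopy \eqref{eq:H-isotopy} is generated by a bump function vanishing outside the relevant neighborhood, so the gluing is trivial there. The verification that the resulting global objects satisfy (A) and (B) is then a routine consequence of the corresponding properties of their restrictions to $Q$, just as in Lemma \ref{lm:prep2}.
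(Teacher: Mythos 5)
Your proposal takes the same route as the paper, whose proof of this corollary is a one-liner: part (1) follows from the disk case of Theorem \ref{thm:thm-main-2} combined with Lemma \ref{lm:smooth-equiv} (your computation is exactly \eqref{eq:compos-function} transplanted to the disk), and part (2) is deduced from Lemma \ref{cor:main}, with all details left to the reader. The only point to watch in your part (2) is that "restricting the decomposition of part (1) to $Q=\overline{M\setminus D}$" tacitly assumes $h(Q)=Q$, which need not hold since $h$ carries $g$-leaves to $f$-leaves but need not preserve $D$; the safer variant, used in the analogous Lemma \ref{lm:prep2}, is to apply Corollary \ref{cor:conj-cylinder} afresh to $f|_Q$ and $g|_Q$ before invoking Lemma \ref{cor:main} and then glue across $D$ as you describe.
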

\begin{proof}
	(1) follows from Theorem \ref{thm:thm-main-2} proved for the case of disks and Lemma \ref{lm:smooth-equiv}, (2) is a consequence of Lemma \ref{cor:main}. We left details to the reader.
\end{proof}

\subsection{Proof of Theorem \ref{thm:thm-main-2} for the case of $S^2$}
Let $M$ be a surface diffeomorphic to $S^2$ and $f$ be a function from $\FF$. By Proposition \ref{prop:properties-f} $f$ has two non-degenerated extremes $z_0$ and $z_1$.

Let $V_i, W_i$ be open connected and $f$-foliated neighborhood of $z_i$ such that $\overline{W}_i\subset V_i$ and $\overline{V}$ does not contain critical circles of $f$, $i = 0,1$.
Let also $Q$ be an open $f$-foliated subset of $M$ containing all critical circles of $f$ and such that $Q\cap W_i\neq \varnothing$, $i = 0,1$. A subspace $Q$ is diffeomorphic to a cylinder.
Denote by $U_1 = Q\cap V_1$.

{\it Step 1.}  By Lemma \ref{lm:prep1} there exists a Morse function $g\in \FF$ such that $f = g$ on $\overline{V}_0$.

{\it Step 2.} Since $U_1$ is diffeomorphic to a disk, it follows from (1) of Corollary \ref{cor:4} that there exists a diffeomorphism $h:U_1\to U_1$ and a smooth function $\alpha_1:g(U_1)\to P$ such that $f|_{U_1} = \alpha_1\circ g|_{U_1}\circ h_1^{-1}.$

{\it Step 3.} Since $f = g$ on $V_0\cap U_1$, which is an connected $f$-foliated neighborhood of $\partial U_1$, then by (2) of Corollary there exist a diffeomorphism $\tilde{h}_1:U_1\to U_1$ and a smooth function $\tilde{\alpha}_1:g(U_1)\to P$ such that $\tilde{h}_1 = \id$ on $W_0\cap U_1$, $\tilde{h}_1 = h$ on $U_1\setminus V_0$, and $\tilde{\alpha}_1 = \id$ on $g(W_0\cap U_1)$, $\tilde{\alpha_1} = \alpha_1$ on $g(U_1\setminus V_0)$ so that $f|_{U_1} = \tilde{\alpha}_1\circ g|_{U_1}\circ \tilde{h}_1^{-1}$

{\it Step 4}. Since $f = g$ on $\overline{V}$, it follows that a diffeomorphism $\tilde{h}_1$ and a smooth function $\tilde{\alpha}_1$ can be extended by identities on $M$ and $g(M)$ respectively; denote this extensions by $\tilde{\alpha}$ and $\tilde{h}$. We obtain
$f = \tilde{\alpha}\circ g\circ \tilde{h}^{-1}.$

{\it Step 5}.  By Lemma \ref{lm:smooth-equiv} $g$ is smoothly equivalent  to $f_0:D^2\to B,$ i.e., $g = \ell\circ f_0\circ r^{-1}$ for some diffeomorphisms $r:M_0\to M$ and $\ell:B\to P$. 

{\it Step 6.}  Combining results from Step 4 and Step 5 we obtain 
\begin{align*}
	f &= \tilde{\alpha}\circ g\circ \tilde{h}^{-1}\\ &= \tilde{\alpha}\circ (\ell\circ f_0\circ r^{-1})\circ \tilde{h}^{-1}\\ &=\varkappa\circ f_0\circ h^{-1} 
\end{align*}
where $\varkappa = \tilde{\alpha}\circ \ell$ and $h = \tilde{h}\circ r$.
\qed

\bibliographystyle{plain}

\end{document}